
\documentclass[12pt,draftcls,onecolumn]{IEEEtran}
\usepackage{amsmath,amssymb,array,graphicx,palatino,verbatim}

\usepackage{latexsym,url}
\usepackage{amsmath,amssymb,amsthm}
\usepackage{algorithm,algorithmic}
\usepackage{color,array,graphicx,verbatim,xtab}
\usepackage{cite} 
\usepackage{setspace}

\usepackage
  [breaklinks,bookmarks,bookmarksnumbered,bookmarksopen,bookmarksopenlevel=2]
  {hyperref}


\newcommand{\reals}{{\mathbb{R}}}

\newcommand{\vect}[1]{\boldsymbol{#1}}
\newcommand{\ie}{{\it i.e.}}
\newcommand{\eg}{{\it e.g.}}

\newtheorem{theorem}{Theorem}
\newtheorem{lemma}[theorem]{Lemma}
\newtheorem{definition}{Definition}

\newtheorem{remark}{Remark}
\newtheorem{corollary}{Corollary}

\usepackage{color}
\newcommand{\mycomment}[3]%
{%
\marginpar{%
  \hfil%
  \tiny{\textcolor{#2}{{\bf\textsc{#1}}}}%
  \hfil%
}%
\footnote{\textcolor{#2}{{\bf\textsc{#1}:}~~#3}}
}
\newcommand{\kakhbod}[1]%
{\mycomment{kakhbod}{blue}{#1}}
\newcommand{\jckoo}[1]%
{\mycomment{jckoo}{red}{#1}}

\begin{document}

\title{Market Mechanisms with Non-Price-Taking Agents}
\author{
{\Large {Ali Kakhbod}} \\
Department of Electrical Engineering and Computer Science \\
University of Michigan, Ann Arbor, MI, USA.\\
Email: {\texttt{akakhbod@umich.edu}}}

\date{May 2011}
\maketitle

\begin{abstract}
\textbf{The paper develops a  decentralized resource allocation mechanism for allocating  divisible goods with capacity constraints to  non-price-taking agents with general concave utilities. The proposed mechanism is always budget balanced, individually rational, and it converges to an optimal solution of the corresponding centralized problem. Such a mechanism is very useful in a network with general topology and no auctioneer  where the competitive agents/users want different type of services.}
\end{abstract}

\begin{section}{Introduction}
\label{1}
\begin{subsection}{Motivation and Challenges}
Most of today's market networks support the delivery of a variety of goods to their agents/ individuals.
One of the main challenges in market networks is the design of resource allocation strategies
which guarantee the delivery of different goods/services and maximize some performance
criterion (e.g. the network's utility to its agents/individuals). The challenge in determining
such resource allocation strategies comes from the fact that each agent's utility is its own
private information. The network (network manager) is unaware of the agents' utilities. If
information were centralized, the resource allocation problem could be formulated and solved as a
mathematical programming problem or as a dynamic programming problem. Since information is
not centralized such formulations are not possible. The challenge is to determine a decentralized
message exchange process among the agents and an allocation rule (based on the outcome of
the message exchange process) that eventually lead to a resource allocation that is optimal for
the corresponding centralized problem.\\

The above considerations have guided most of the research on resource allocation in market
networks. Basically, there are two microeconomic approaches have been used for the development of efficient decentralized resource allocation schemes in market networks: resource-oriented and
price-oriented \cite{hurwicz1973}.\\

In the resource-oriented approach, each individual computes the marginal values for his current
resources, and communicates them to the rest of the agents. The allocation is then changed so
that agents with an above average marginal utility receive more of this resource and agents with
a below average marginal utility receive less. This approach has been used in \cite{kurose1989} to develop
decentralized algorithms for optimally allocating a single resource to a set of interconnected
computing individuals. In the price-oriented approach, an initial allocation of resources is made
and an arbitrary set of systemwide initial resource prices is chosen. Prices then are iteratively
changed to accommodate the demands for resources until the total demand for a resource
exactly equals the total amount available. Most of the results on decentralized resource allocation
currently available in the literature are based on the price-oriented approach \cite{cocchi1993,jordan1995,49,wang1997,murphy19941,murphy19942,parris1992,kelly1994,kelly1998,gupta1997,coucoubetis1997,mackie-mason1995,low1993,deveciana1998,thomas1997,thomas2002}. In this
paper, since we follow the price-oriented approach to resource allocation, we critically review
the results reported in \cite{cocchi1993,jordan1995,49,wang1997,murphy19941,murphy19942,parris1992,kelly1994,kelly1998,gupta1997,coucoubetis1997,mackie-mason1995,low1993,deveciana1998,thomas1997,thomas2002} so that we can point out to the contributions of our work.\\

The work currently available on decentralized resource allocation by price-oriented methods has addressed, either by analysis \cite{wang1997,49,murphy19941,kelly1994,kelly1998,gupta1997,coucoubetis1997,mackie-mason1995,low1993,deveciana1998,thomas1997}, or simulation and
analysis \cite{cocchi1993,murphy19941,murphy19942,parris1992,gupta1997}. A significant number of publications have dealt with single good
\cite{wang1997,murphy19941,murphy19942,parris1992,deveciana1998}, or with the allocation of a single resource per connection \cite{jordan1995,49,wang1997,murphy19941,murphy19942,parris1992,kelly1994,kelly1998,gupta1997,coucoubetis1997,mackie-mason1995}.\\

In several papers \cite{wang1997,49,murphy19941,low1993,deveciana1998,thomas1997} the following general philosophy to resource allocation by price-oriented methods has been adopted: (1) formulate a centralized constrained optimization problem where the objective is the maximization of a social welfare function and the constraints are imposed by the availability of network resources; and (2) use
pricing methods to devise a decentralized scheme that realizes the solution of the centralized
problem and satisfies the informational constraints imposed by the network. The existence of
a solution to the centralized problem is shown, and market methods are used to structure and
develop the solution. The existence of a set of prices that induce agents to request the optimal allocation is established and, in some cases \cite{wang1997,gupta1997,low1993,deveciana1998,thomas1997} an iterative
scheme for adjusting the prices based on the agent's requests is described. However, none of the papers specify a mechanism to force the successive prices to converge to the optimal set of prices.
\\

Furthermore, in all of the aforementioned papers it is assumed that individuals/agents act as if
their behavior has no effect on the equilibrium prices reached by the market/message allocation
process. In other words, there is an auctioneer/network manager who updates the prices according
to the excess demand while the agents update their demands based on the prices (i.e. the network
periodically adjusts the prices based on the monitored agent request for resources).\\

In this paper, we follow the price-oriented approach and the philosophy presented in the
previous paragraph, to address the resource allocation problem in market networks. Our formulation
of the resource allocation problem captures the issues and considerations discussed in the
first paragraph of this section. In particular, we consider a connection-oriented network that can
potentially offer multiple services/goods to individuals. agents' preferences are summarized by
means of their utility functions, and each agent is allowed to request more than one type of
service/good. No assumption is made on the functional form of the utility functions, although
some mild regularity conditions are imposed (see Section II). We assume that the relation between
the availability of goods and resource allocation is given (see discussion in Section II), and we
incorporate it as a constraint into a static optimization problem (the centralized problem). The
objective of the optimization problem is to determine the amount and, required resources for
each type of service/good to maximize the sum of the agents' utilities. We prove the existence
of a solution of the optimization problem, and describe a competitive market economy that
implements the solution and satisfies the informational constraints imposed by the nature of
the decentralized resource allocation problem. In particular, in contrast to other papers that
follow the philosophy of price-oriented method we assume individuals/agents act as if their
behavior has a direct effect on the equilibrium prices reached by the market allocation process,
we justify it in Section I-B, due to this relaxation agents in market networks called non-price
taking. This relaxation is justified in fully decentralized networks/systems in which there is no
auctioneer (equivalently the designer is purely wealth redistributionary, see Section II) to set the
price and thus, due to the nature of the market problems [29], individuals should converge, via a
decentralized message exchange process, to the same price in a distributed manner. A philosophy
similar to ours with different objectives for the public good problem has also been adopted in \cite{shroti2}.

\end{subsection}

\begin{subsection}{Contribution of the paper}
\textit{The contributions of this paper are:}
\begin{enumerate}
	\item \textit{The formulation of a general optimization problem with non-price taking agents that
request multiple types of divisible goods with capacity constraint.}
  \item \textit{The discovery of a novel decentralized message exchange process/mechanism for allocating
divisible goods with capacity constraints to non-price-taking agents with general concave
utility functions which requires minimal coordination overhead and possesses the following
desirable properties:}
\newcounter{Lcount}  
\begin{list}{(P\arabic{Lcount})}{\usecounter{Lcount}%
\addtolength{\leftmargin}{-\labelwidth}%
\settowidth{\labelwidth}{(P\arabic{Lcount})}%
\addtolength{\leftmargin}{\labelwidth}}
	\item \textit{The mechanism is always budget-balanced at every allocation.}
	\item \textit{The mechanism converges to an allocation (that we call it efficient stationary profile) corresponding to an optimal solution of the centralized problem.}
	\item \textit{Every efficient stationary profile resulted from the mechanism is weakly preferable
than the initial endowment of each agent.}
\end{list}
\end{enumerate}

To the best of our knowledge, none of the decentralized resource allocation mechanisms
proposed so far for allocating divisible goods with capacity constraints possesses simultaneously
properties (P1-P3) with general concave utility functions and non-price-taking agents. 
\end{subsection}
\begin{subsection}{Organization of the paper}
The rest of the paper is organized as follows.
In section \ref{model} we formulate the centralized  problem with non-price-taking agents. In section \ref{gamef} we describe the elements of the  mechanism we propose for achieving a solution of the  centralized problem. In section \ref{pro} we analyze the properties of the proposed mechanism.   We conclude in section \ref{con}.\\


\end{subsection}
\end{section}
\begin{section}{Model and Objectives}
\label{model}
\begin{subsection}{General Model}
In this problem, we have $m$ non-price-taking agents and the set of agents is
$\mathcal{A} = \{1,2,\ldots,m\}$.  Suppose there are $n$ goods which
are each infinitely divisible; let $\mathcal{L} =
\{l_1,l_2,\ldots,l_n\}$ be the set of goods.  There is only a limited
amount of each good; the maximum amount of $l_j$ available is denoted
by $c_{l_j}$, for $j=1,2,\ldots,n$, and is always nonnegative. Consider a specific agent $i$.  Let $\mathcal{L}_i =
\{l_{i,1},l_{i,2},\ldots,l_{i,|\mathcal{L}_i|}\} \subseteq
\mathcal{L}$ be the subset of goods which may be requested by agent
$i$.  For each agent, the subset $\mathcal{L}_i$ is known and fixed in
advance.  The amount of goods actually demanded by the agent is given
by the demand vector $\vect{x}_i =
(x_{i,1},x_{i,2},\ldots,x_{i,|\mathcal{L}_i|})$, where $x_{i,k}$ is
the amount of good $l_{i,k}$ requested by $i$, for $k =
1,2,\ldots,|\mathcal{L}_i|$.  For a given demand $\vect{x}_i$, the
utility to agent $i$ is the function $U_i : \reals^{|\mathcal{L}_i|}
\to \reals$, $U_i \in \mathcal{U}$, which is differentiable, concave, and satisfies
$U_i(\vect{0}) = 0$ and $U_i(\vect{z}) = -\infty$ if any entry of
$\vect{z}$ is negative.  Also, call $\vect{x} =
(\vect{x}_1,\ldots,\vect{x}_m)$ to be the overall demand of all
agents.  Furthermore, let $\mathcal{A}_l \subseteq \mathcal{A}$ be the
set of agents who may request good $l$; \ie, $\mathcal{A}_l = \{i : l
\in \mathcal{L}_i\}$ \footnote{We assume $|A_l|>2, \forall l\in\mathcal{L}$}.  
Let us denote by $-i$ to be the set of agents who are not $i$; \ie,
$-i = \mathcal{A} \setminus \{i\}$.  Then we can call $\vect{x}_{-i} =
\{\vect{x}_j : j \ne i\}$ and $\mathcal{A}_l^{-i} = \mathcal{A}_l
\setminus \{i\}$. 

Suppose that we have a designer (\eg, network manager) who wants to
design a mechanism to maximize the social welfare $\sum_{i=1}^m [
U_i(\vect{x}_i) - t_i(\vect{x}_i) ]$.  Here, $t_i :
\reals^{|\mathcal{L}_i|} \to \reals$ consist of taxation policies on
agents $i$, $i=1,\ldots,m$, and is to be designed.  We consider a
scenario where the role of the designer is purely wealth
redistributionary; there is no net tax collected, so $\sum_{i=1}^m
t_i(\vect{x}_i) = 0$.  Then, we can write the tax-explicit social
welfare maximization problem, that we call it \textbf{Max1}, as the following:
\begin{eqnarray}
\begin{array}{ll}
\mbox{maximize} & \displaystyle \sum_{i=1}^m [ U_i(\vect{x}_i)
- t_i(\vect{x}_i) ] \\
\mbox{subject to}
& \displaystyle \sum_{i=1}^m t_i(\vect{x}_i) = 0 \\
& \displaystyle \sum_{i: l \in \mathcal{L}_i} x_{i,l} \leq c_l, \; \;
\forall l \in \mathcal{L}
\end{array}
\label{eq:tax_explicit_market_problem}
\end{eqnarray}

where the optimization variables are $\vect{x}_i \in \reals^{|\mathcal{L}_i|}$ for all $i \in \mathcal{A}$.  The resulting
taxes charged to (or accrued by) the agents are denoted by the vector
$\vect{t} = (t_1,t_2,\ldots,t_m)$, with $t_i$ being shorthand for
$t_i(\vect{x}_i)$.


\end{subsection}
\begin{subsection}{The decentralized problem  and the Objective}
We consider the model of the previous subsection
with the following assumptions on its information structure.

\begin{list}{(A\arabic{Lcount})}{\usecounter{Lcount}%
\addtolength{\leftmargin}{-\labelwidth}%
\settowidth{\labelwidth}{(A\arabic{Lcount})}%
\addtolength{\leftmargin}{\labelwidth}}
\item Each agent only knows his own utility; this utility is his own private information.
\item There is no auctioneer to set/adjust the price (price vector) per unit of service/good(s).
\item Agents talk to each other in a broadcast setting, i.e., each agent hears every other agent's message. Therefore, agent $i$ does not need to be aware of $\mathcal{A}_l^{-i}$, for every $l, l\in \mathcal{L}_i$.
\end{list}

From the above description it is clear that the information in the network is decentralized.

Under the above  assumptions the objective is to determine a mechanism which has the following properties:

\newcounter{Xcount}  
\begin{list}{(P\arabic{Xcount})}{\usecounter{Xcount}%
\addtolength{\leftmargin}{-\labelwidth}%
\settowidth{\labelwidth}{(P\arabic{Xcount})}%
\addtolength{\leftmargin}{\labelwidth}}
    \item  The mechanism is individually rational  (the agents voluntarily participate in the mechanism).
    \item The mechanism is  balanced budget at every allocation. 
    \item  The mechanism converges to an optimal solution of the corresponding centralized problem \textbf{Max1}. 
\end{list}

\begin{remark}
Before proceeding with the specification of our mechanism, we further comment
on the appropriateness of the non-price taking behavior of individuals. The non-price taking
behavior is inspired by the following problem/environment:
\begin{quote}
``Consider a fully informationally decentralized system that only consists of a community
of \textbf{competitive individuals} and a set of goods that the individuals are
interested in obtaining them (or a subset of them). Under the assumption that the satisfaction
(utility) of each individual-which is a function of his demand profile (amount
of goods allocated to him) and his tax payment-is its private information, what is the
optimal demand profile for each individuals that maximize the social welfare?''
\end{quote}
As it is observed from the above environment, individuals (should) act as if their behavior has
a direct effect on the equilibrium prices reached by the market allocation process. Further, since there is no auctioneer who could be able to collect the taxes, the budget should also be balanced  because otherwise there might be money left unallocated at the end of the allocation
process.
\end{remark}
\end{subsection}
\end{section}

\begin{section}{A mechanism for allocating divisible goods}
\label{gamef}
\begin{subsection}{The components of the mechanism}
A mechanism is described by $(\mathcal{M},f)$, where $\mathcal{M}=\times_{i=1}^m\mathcal{M}_i$ is the message space, specifying for each agent $i, i\in \mathcal{A}$, the set of messages $\mathcal{M}_i$ that agent $i$ uses to communicate to other agents, and $f$ is an outcome function that describes the actions that are taken for every $\vect{m}:=(\vect{m}_1,\vect{m}_2,\cdots,\vect{m}_m)\in \mathcal{M}$. The mechanism $(\mathcal{M},f)$ is common knowledge among the agents.

For the decentralized resource allocation problem formulated in section \ref{model}  we propose a mechanism the components of which we describe below. 

\noindent\textbf{Message space}: The message space for agent $i$,
$i\in \mathcal{A}$, is given by
$\mathcal{M}_i \subset \mathbb{R}_+^{2|\mathcal{L}_i|}$. Specifically a message of
agent $i$ is of the form $$ \vect{m}_i=(x_{i,l_1},x_{i,l_2},\cdots,x_{i,l_{|\mathcal{L}_i|}},p_{i,l_1},p_{i,l_2},\cdots,p_{i,l_{|\mathcal{L}_i|}})=(\vect{x}_i,\vect{p}_i)$$  where $0 \leq x_{i,l} \leq c_l$ and $0\leq p_{i,l}\leq M, \ \forall i \in \mathcal{A}, \ l \in \mathcal{L}_i$, where $0<M<\infty$. The component $x_{i,l}$ denotes the amount of good $l, l\in \mathcal{L},$ that agent $i$ requests. The component $p_{i,l}, l \in \mathcal{L}_i$, denotes the price that agent $i$ is willing to pay for the unit of good $l$. 

\noindent\textbf{Outcome Function}: At any time slot $n, n=1,2,\cdots $, the outcome function  $f$ is given by
$$
f: \mathcal{M}=\mathcal{M}_1 \times \mathcal{M}_2 \times \cdots \times
\mathcal{M}_m \rightarrow \mathbb{R}_+^m \times \mathbb{R}^{|\mathcal{L}_1|} \times \mathbb{R}^{|\mathcal{L}_2|}
\cdots  \times \mathbb{R}^{|\mathcal{L}_m|}
$$
 and is defined as follows. For any $$\vect{m}^{(n)}:=(\vect{m}_1^{(n)},\vect{m}_2^{(n)},\cdots, \vect{m}_m^{(n)}) \in \mathcal{M},$$
 \begin{eqnarray}
 f(\vect{m}^{(n)})=f(\vect{m}_1^{(n)},\vect{m}_2^{(n)}, \cdots, \vect{m}_m^{(n)})=({\vect{x}}_1^{(n)},{\vect{x}}_2^{(n)},\cdots,{\vect{x}}_m^{(n)},\vect{t}_1(\vect{m}^{(n)}), \vect{t}_2(\vect{m}^{(n)}), \cdots , \vect{t}_m(\vect{m}^{(n)}))\nonumber
 \end{eqnarray}
 where at any time $n, n = 1, 2, ...,$ (superscript $(n)$ denotes time slot $n$) $\vect{x}_i^{(n)}=(x_{i,l_1}^{(n)},x_{i,l_2}^{(n)},\cdots,x_{i,l_{|\mathcal{L}_i|}}^{(n)})$ 
 and $x_{i,l}^{(n)}, i \in \mathcal{A},$ is the amount of good $l$ allocated to agent $i$, and $\vect{t}_i(\vect{m}^{(n)})=(t_{i,l_1}^{(n)},t_{i,l_2}^{(n)},\cdots,t_{i,l_{|\mathcal{L}_i|}}^{(n)})$, $i\in \mathcal{A}$ ($t_{i,l}$ is the tax (subsidy) agent $i$ pays (receives) for good $l, l \in \mathcal{L}_i$). We proceed now to specify $t_{i,l}^{(n)}, l\in \mathcal{L}_i$.
 
 For every $i \in \mathcal{A}, l\in \mathcal{L}_i, n=1,2,3,...$

\begin{eqnarray}
\label{tax}
\label{2}
t_{i,l}^{(n)}&=&w_{-i,l}^{(n)}x_{i,l}^{(n)}+\frac{1}{\kappa^{(n)}}|p_{i,l}^{(n)}-\bar{p}_l^{(n)}|^2\nonumber \\&&-w_{-i,l}^{(n)}\left(p_{i,l}^{(n)}-p_{-i,l}^{(n)}\right)\left(\frac{x_{i,l}^{(n)}+\sum_{j\neq i, j\in \mathcal{A}_l}{x_{j,l}^{(n)}-c_l}}{\gamma}\right) \nonumber \\
&&+\frac{1\{x_{i,l}^{(n)}>0\}1\{\mathcal{E}_{-i,l}^{(n)}+x_{i,l}^{(n)}>0\}}{1-1\{x_{i,l}^{(n)}>0\}1\{\mathcal{E}_{-i,l}^\textit{l}+x_{i,l}^{(n)}>0\}}+\phi_{i,l}^{(n)}\mathcal{I}\{|\mathcal{A}_l|>2\}
\end{eqnarray}
 where $\gamma$ is fixed and sufficiently large, 
 \begin{eqnarray}
 &&0<\theta^{(n+1)}<\theta^{(n)},  \; \; \lim_{n\rightarrow \infty} \theta^{(n)}=0, \nonumber \\
 &&\lim_{n\rightarrow \infty} \kappa^{(n)}=\infty, \; \;  \kappa^{(n)}:=\sum_{k=1}^n\theta^{(k)}, \ \mbox{for example $\theta^{(n)}=\frac{1}{n}, n=1,2,3,...$}
 \end{eqnarray}
 \[ 1\{A\} = \left\{ \begin{array}{ll}
         1-\epsilon  & \mbox{if $A$ holds};\\
        0 & \mbox{otherwise},\end{array} \right. \] 
where $\epsilon$ is bigger than zero and  sufficiently small,\footnote{Therefore, when A and B (both) hold, then $\frac{1\{A\}1\{B\}}{1-1\{A\}1\{B\}}\approx\frac{1}{0^+}$ is well defined and it becomes a large number.} and
 \[ \mathcal{I}\{A\} = \left\{ \begin{array}{ll}
         1 & \mbox{if $A$ holds};\\
        0 & \mbox{otherwise},\end{array} \right. \]
\begin{eqnarray}
&&w_{i,l}^{(n)}=\frac{\sum_{k=1}^n\theta^{(k)}p_{i,l}^{(k)}}{\kappa^{(n)}}, \\
&& \bar{p}_l^{(n)}=\frac{\sum_{i\in \mathcal{A}_l}p_{i,l}^{(n)}}{|\mathcal{A}_l|},  \\
&&{p}_{-i,l}^{(n)}=\frac{\sum_{\substack{j \in \mathcal{A}_l \\ j \neq i}}p_{j,l}^{(n)}}{|\mathcal{A}_l|-1}, \\
&&  w_{l}^{(n)}=\frac{\sum_{k=0}^{n-1}\theta^{(k+1)}\bar{p}_{l}^{(k)}}{\kappa^{(n)}}, \\
&&w_{-i,l}^{(n)}= \frac{\sum_{j \in \mathcal{A}_l,j\neq i}w_{i,l}^{(n)}}{|\mathcal{A}_l|-1}, \\
&&\mathcal{E}_{-i,l}^{(n)}=\left\{\sum_{\substack{j\in \mathcal{A}_l, j \neq i}}x_j^{(n)}\right\}-c_l, \\
&&\mathcal{E}_{i,l}^{(n)}=(|\mathcal{A}_l|-1)x_i^{(n)}-c_l,
\end{eqnarray}
\begin{eqnarray}
\label{bterm}
\phi_{i,l}^{(n)}&=&-\frac{1}{\kappa^{(n)}|\mathcal{A}_l|^2}\left[\left(|\mathcal{A}_l|-1\right)\sum_{\substack{j\in \mathcal{A}_l \\ j \neq i}}(p_j^{(n)})^2+\left(\sum_{\substack{j\in \mathcal{A}_l \\ j \neq i}}p_j^{(n)}\right)^2\right]\nonumber \\
&&\!\!+\frac{1}{\kappa^{(n)}|\mathcal{A}_l|^2}\left[\frac{2\left(|\mathcal{A}_l|-1\right)\sum_{\substack{j \in \mathcal{A}_l\\ j \neq i}}\sum_{\substack{k \in \mathcal{A}_l \\ k\neq i, j}}p_{j,l}^{(n)}p_{k,l}^{(n)}}{|\mathcal{A}_l|-2}\right]\nonumber \\
&&\!\!-\frac{\sum_{\substack{j \in \mathcal{A}_l\\ j \neq i}}\sum_{\substack{k \in \mathcal{A}_l \\ k\neq i, j}}x_{j,l}^{(n)}w_{k,l}^{(n)}}{\left(|\mathcal{A}_l|-2\right)\left(|\mathcal{A}_l|-1\right)}+\frac{\sum_{\substack{j \in \mathcal{A}_l\\ j \neq i}}\sum_{\substack{k \in \mathcal{A}_l \\ k\neq i, j}}p_{j,l}^{(n)}x_{j,l}^{(n)}w_{k,l}^{(n)}}{\gamma \left(|\mathcal{A}_l|-2\right)\left(|\mathcal{A}_l|-1\right)}\nonumber \\
&&\!\!-\frac{\sum_{\substack{j \in \mathcal{A}_l\\ j \neq i}}\sum_{\substack{k \in \mathcal{A}_l \\ k\neq i, j}}\sum_{\substack{r \in \mathcal{A}_l \\ r\neq i, j,k}}p_{k,l}^{(n)}x_{j,l}^{(n)}w_{r,l}^{(n)}}{(|\mathcal{A}_l|-1)^2 \gamma (|\mathcal{A}_l|-3)}-\frac{w_{-i,l}^{(n)}p_{-i,l}^{(n)}\mathcal{E}_{-i,l}^{(n)}}{\gamma}\nonumber \\
&&\!\!+\frac{\sum_{\substack{j \in \mathcal{A}_l\\ j \neq i}}\sum_{\substack{k \in \mathcal{A}_l \\ k\neq i, j}}p_{k,l}^{(n)}\mathcal{E}_{j,l}^{(n)}w_{j,l}^{(n)}}{(|\mathcal{A}_l|-1)^2 \gamma (|\mathcal{A}_l|-2)}-\frac{\sum_{\substack{j \in \mathcal{A}_l\\ j \neq i}}\sum_{\substack{k \in \mathcal{A}_l \\ k\neq i, j}}p_{k,l}^{(n)}x_{j,l}^{(n)}w_{k,l}^{(n)}}{(|\mathcal{A}_l|-1)^2 \gamma (|\mathcal{A}_l|-2)} \nonumber \\
&&\!\!+\frac{\sum_{\substack{j \in \mathcal{A}_l\\ j \neq i}}\sum_{\substack{k \in \mathcal{A}_l \\ k\neq i, j}}\sum_{\substack{r \in \mathcal{A}_l \\ r\neq i, j,k}}p_{r,l}^{(n)}\mathcal{E}_{j,l}^{(n)}w_{k,l}^{(n)}}{(|\mathcal{A}_l|-1)^2 \gamma (|\mathcal{A}_l|-3)}.
\end{eqnarray}
Next we specify additional subsidies $Q^i$ that agent $i, i\in \mathcal{A}$, may receive. For that matter we consider all goods $l \in \mathcal{L}$ such that $|\mathcal{A}_l| = 3$. For each good $l$, with $|\mathcal{A}_l| = 3$ we define the 
\begin{eqnarray}
\label{5005}
Q_{\{l:|\mathcal{A}_l|=3\}}^{(n)}&=&-w_{-i,l}^{(n)}x_{i,l}^{(n)}-\frac{1}{\kappa^{(n)}}|p_{i,l}^{(n)}-\bar{p}_l^{(n)}|^2\nonumber \\
&&+w_{-i,l}^{(n)}\left(p_{i,l}^{(n)}-p_{-i,l}^{(n)}\right)\left(\frac{x_{i,l}^{(n)}+\sum_{j\neq i, j\in \mathcal{A}_l}{x_{j,l}^{(n)}-c_l}}{\gamma}\right) \nonumber \\
&\stackrel{(a)}{=}&o(1)-w_{-i,l}^{(n)}x_{i,l}^{(n)}-\frac{1}{\kappa^{(n)}}|p_{i,l}^{(n)}-\bar{p}_l^{(n)}|^2,
\end{eqnarray}
where $(a)$ follows since $\gamma$ is chosen sufficiently large.\\
Furthermore for each good $l\in \mathcal{L}$ where $|\mathcal{A}_l|=3$ an agent $k_l\in\mathcal{A}_l$ is chosen, randomly, to assign the subsidy $Q_{\{l:|\mathcal{A}_l|=3\}}^{(n)}$. Let $l_1,l_2,\cdots,l_r$ be the set of goods such that $|\mathcal{A}_{l_i}|=3, i=1,2,3,\cdots,r,$ be the corresponding agents that receive $Q_{\{l:|\mathcal{A}_l|=3\}}^{(n)}$.\\
Based on the above, the tax (subsidy) paid (received) by agent $j,j\in \mathcal{A}$,  is the following. If $j\neq k_{l_1}, k_{l_2},\cdots k_{l_r}$ then
\begin{eqnarray}
t_j^{(n)}=\sum_{l\in\mathcal{L}_j}t_{j,l}^{{n}},
\end{eqnarray}
If $j=k_{l_i}, i=1,2,\cdots,r,$ then
\begin{eqnarray}
t_{k_{l_i}}^{(n)}=\sum_{l\in \mathcal{L}_{k_{l_i}}}t_{k_{l_i},l}^{(n)}+Q_{\{l_i:|\mathcal{A}_{l_i}|=3\}}^{(n)},
\end{eqnarray}
where $Q_{\{l:|\mathcal{A}_l|=3\}}^{(n)}$ is defined by \eqref{5005}.\\

Note that $Q_{\{l_i:|\mathcal{A}_{l_i}|=3\}}^{(n)}$ is not controlled by agent $k_{l_i}$ , that is, $Q_{\{l_i:|\mathcal{A}_{l_i}|=3\}}^{(n)}$ does not depend on agent $k_{l_i}$ 's message. Thus, the presence (or absence) of $Q_{\{l_i:|\mathcal{A}_{l_i}|=3\}}^{(n)}$ does not influence the strategic behavior of agent $k_{l_i}$ . We have assumed here that the agents $k_{l_1}, k_{l_2}, \cdots, k_{l_r}$ , are distinct. Expressions similar to the above hold when the agents $k_{l_1}, k_{l_2}, \cdots, k_{l_r}$ are not distinct.

\begin{remark}
For each good $l \in \mathcal{L}$ with $|\mathcal{A}_l| = 3$, we could equally divide the subsidy $Q_{\{l:|\mathcal{A}_{l}|=3\}}^{(n)}$ among all agents not in $\mathcal{A}_l$ instead of randomly choosing one agent $k\in \mathcal{A}_l$. Any other division
of the subsidy $\mathcal{A}_l$ among agents not in $\mathcal{A}_l$ would also work.
\end{remark}
\subsection{The dynamic of the mechanism}
The dynamic of the mechanism is as follows. Note that $(\vect{x}^{(1)},\vect{p}^{(1)})$ is given, arbitrary, and feasible.
\begin{itemize}
	\item At time $n\geq 1$: every agent $i, i \in \mathcal{A}$ solves and reports
\begin{eqnarray}
\label{1}	
(\vect{x}_i^{(n+1)},\vect{p}_i^{(n+1)}):=\arg \max_{\vect{x}_i,\vect{p}_i} \left\{U_i(\vect{x}_i^{(n)})-\sum_{l \in \mathcal{L}_i}t_{i,l}^{(n)}\right\}
\end{eqnarray}
where 
\begin{eqnarray}
\label{2}
t_{i,l}^{(n)}&=&w_{-i,l}^{(n)}x_{i,l}+\frac{1}{\kappa^{(n)}}|p_{i,l}-\bar{p}_l^{(n)}|^2\nonumber\\
&&-w_{-i,l}^{(n)}\left(p_{i,l}-p_{-i,l}^{(n)}\right)\left(\frac{x_{i,l}+\sum_{j\neq i, j\in \mathcal{A}_l}{x_{j,l}^{(n)}-c_l}}{\gamma}\right)\nonumber \\
&&+\phi_{i,l}^{(n)}
\end{eqnarray}
 and $\phi_{i,l}^{(n)}$ is defined as \eqref{bterm}.
\end{itemize}
Before proceeding further we define some notations and concepts.
\begin{definition}
A taxation scheme $\vect{t} = (t_1,\ldots,t_m)$ is
\emph{budget-balanced} if the sum of taxes is zero; \ie, $\sum_{i=1}^m
t_i = 0$. 
\end{definition}
In other words, budget-balance means that
the sum of taxes paid by some of the agents is equal to the sum of the money
(subsidies) received by the rest of the agents participating in the allocation process. Budget balance implies that there is no money left unallocated at the end of the allocation.
\begin{definition}[Stationary profile (SP)]
A message profile $\vect{m}^*=(\vect{m}_1^*,\vect{m}_2^*,\cdots,\vect{m}_m^*), \vect{m}^* \in \mathcal{M}$, is called stationary profile  if for any $i, i \in \mathcal{A},$
\begin{eqnarray}
U_i(\vect{x}_i^*)-\vect{t}_i(\vect{m}_i^*,\vect{m}_{-i}^*)\geq U_i(\vect{x}_i)-\vect{t}_i(\vect{m}_i,\vect{m}_{-i}^*) \quad \forall \ \vect{m}_i\in \mathcal{M}_i.  
\end{eqnarray}
\end{definition}
In the other words, a stationary profile is a message profile that unilateral deviation is not profitable for any  agents\footnote{It is important to note that the nature of the stationary profile is similar to the concept of  Nash equilibrium. But, in  Nash equilibrium agents' utilities are common knowledge among the agents where as here  agents' utilities are their private information.}.  
\begin{definition}[Individual rationality]
A message profile $\vect{m}=(\vect{m}_1,\vect{m}_2,\cdots,\vect{m}_m), \vect{m} \in \mathcal{M}$, is weakly  preferred by all agents to the initial allocation $(\vect{0},0)$ if 
\begin{eqnarray}
U_i(\vect{x}_i)-\vect{t}_i(\vect{m})\geq U_i(\vect{0})-0=0, \quad \quad \forall \ i\in \mathcal{A}.  
\end{eqnarray}
A mechanism is called Individually rational if the above property is satisfied at any stationary profile, that is, agents  are incentivized to  participate voluntarily in the mechanism.
\end{definition}
 In other words, Individual rationality  asserts that at any stationary profile the utility of each user is at least as much
as its utility before participating in the mechanism.
\end{subsection}
\begin{subsection}{Interpretation of the mechanism}
It observes that the designer of the mechanism can not alter the
agents' utility functions, that is $U_i, i \in \mathcal{A},$ is agent $i'$s private information. Therefore,  the only way  to  achieve the optimal solution of the  corresponding centralized problem is  through the use of appropriate tax functions/incentive. For each good, the tax functions of our mechanism consists of three components $\Upsilon_1^{(n)}$, $\Upsilon_2^{(n)}$ and $\Upsilon_3^{(n)}$, at each time slot $n$. We specify
and interpret these components in the following.
\\
At each time slot $n, n=1,2,\cdots,$ equation \eqref{tax} can be decomposed as follows, 
$$
t_{i,l}^{(n)}=\Upsilon_1^{(n)}+\Upsilon_2^{(n)}+\Upsilon_3^{(n)} \\
$$
where
\begin{eqnarray}
\Upsilon_1^{(n)}&:=&w_{-i,l}^{(n)}x_{i,l}^{(n)} \nonumber \\
\Upsilon_2^{(n)}&:=& \frac{1}{\kappa^{(n)}}|p_{i,l}^{(n)}-\bar{p}_l^{(n)}|^2\nonumber\\
&-&\!\!\!\!\!\!\!w_{-i,l}^{(n)}\left(p_{i,l}^{(n)}-p_{-i,l}^{(n)}\right)\left(\frac{x_{i,l}^{(n)}+\sum_{j\neq i, j\in \mathcal{A}_l}{x_{j,l}^{(n)}-c_l}}{\gamma}\right) \nonumber \\  &+&\!\!\!\!\!\!\!\frac{1\{x_{i,l}^{(n)}>0\}1\{\mathcal{E}_{-i,l}^{(n)}+x_{i,l}^{(n)}>0\}}{1-1\{x_{i,l}^{(n)}>0\}1\{\mathcal{E}_{-i,l}^\textit{l}+x_{i,l}^{(n)}>0\}}
 \nonumber \\
\Upsilon_3^{(n)}&:=& \phi_{i,l}^{(n)} \nonumber
\end{eqnarray}
\begin{itemize}
    \item $\Upsilon_1^{(n)}$ specifies the  amount  agent $i$ has to pay for good $l, l\in \mathcal{L}_i$. It is important to note that the price per unit of a good  that  agent $i$ pays is determined by the message/proposal of the other agents requesting the same good. Thus,  agent $i$ does not control the price it pays or receives.
    \item $\Upsilon_2^{(n)}$ provides the following incentives to the agents, $i, i \in \mathcal{A}_l$: (1) To bid/propose the same price per unit of  good $l, l\in \mathcal{L}.$ (2) To collectively request  a total request for the good  does not exceed the available capacity of the good.  The incentive provided to all agents to bid the same price per unit of good $l$ is captured by the
    term $\frac{1}{\kappa^{(n)}}|p_{i,l}^{(n)}-\bar{p}_l^{(n)}|^2$. The incentive provided to all agents to collectively request a total request that does not exceed the available capacity is imposed by the term
    $$
    \frac{1\{x_{i,l}^{(n)}>0\}1\{\mathcal{E}_{-i,l}^{(n)}+x_{i,l}^{(n)}>0\}}{1-1\{x_{i,l}^{(n)}>0\}1\{\mathcal{E}_{-i,l}^\textit{l}+x_{i,l}^{(n)}>0\}},
    $$
Note that an agent is  very  heavily (infinitely)  penalized if it
requests a nonzero amount of a good, and, collectively,  all the agents of the good
request a total  that exceeds the available capacity. A
joint incentive provided to all agents to bid the same price per
unit of good and to utilize the total capacity of the good is
captured by the term
$$
w_{-i,l}^{(n)}\left(p_{i,l}^{(n)}-p_{-i,l}^{(n)}\right)\left(\frac{x_{i,l}^{(n)}+\sum_{j\neq i, j\in \mathcal{A}_l}{x_{j,l}^{(n)}-c_l}}{\gamma}\right)
$$
\item $\Upsilon_3^{(n)}$, the goal of this component is to lead to  a balanced budget.
Further, it is important to note that $\Upsilon_3^{(n)}$ is not controlled by the agent $i$'s message, thus, it does not influence over its  behavior. 
\end{itemize}
\end{subsection}
\end{section}
\begin{section}{Properties of the mechanism} 
\label{pro}
In this section we prove that the mechanism proposed in Section \ref{gamef} has the following properties:  It is individually
rational.  It is budget-balanced at every feasible allocation. It converges to an optimal solution of the centralized problem \textbf{Max1}. \\
We establish the above properties by proceeding as follows. First we show some properties of a stationary profile of the mechanism, Lemma \ref{lemma5}. Then, we show that
agents voluntarily participate in the allocation process. We do this by showing
that the allocations they receive at all efficient stationary profile  of the mechanism is weakly preferred to the $(\vect{0}, 0)$ allocation they receive when they do not participate in the mechanism,  Theorem \ref{ind rational}. Afterwards, we establish that the
mechanism is budget-balanced at all feasible allocations, Lemma \ref{lemma3}.  Finally, we show that the mechanism
converges to an optimal solution of the centralized allocation problem \textbf{Max1}, Theorem \ref{th1}. \\
We present the proofs of the following theorems and lemmas in Appendix.\\
The following lemma presents some key properties of a stationary profile.
\begin{lemma}
\label{lemma5} \label{nash pe0} 
Let $\vect{m}^\ast=(\vect{x}^\ast,\vect{p}^\ast)$
be a stationary profile. Then for every $l \in \mathcal{L}$ and
$i \in \mathcal{A}_l$, we have,
\begin{eqnarray}
\label{nashlagt}
p_{i,l}^{\ast}&=&p_{j,l}^{\ast}=p_{l}^{\ast}, \\
\label{nashlagtt}
w_{i,l}^{\ast}\Big(\frac{\sum_{i, i\in \mathcal{A}_l}x_{i,l}^{\ast}-c_l}{\gamma}\Big)&=&0, \\
\label{nashlagttt} 
\frac{\partial U_i(\vect{x}_i)}{\partial x_{i,l}}\bigg|_{\vect{m}=\vect{m}^*}&=&w_{-i,l}^{\ast}.
\end{eqnarray}
\end{lemma}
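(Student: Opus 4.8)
The plan is to obtain the three identities as the first-order (KKT) necessary conditions of each agent's best-response problem at the stationary profile, and then to separate the price-consensus statement \eqref{nashlagt} from the capacity/complementary-slackness statement \eqref{nashlagtt} by summing the price condition over the agents sharing a good. By the definition of a stationary profile, for each $i$ the message $\vect{m}_i^{\ast}=(\vect{x}_i^{\ast},\vect{p}_i^{\ast})$ maximizes $U_i(\vect{x}_i)-\sum_{l\in\mathcal{L}_i}t_{i,l}(\vect{m}_i,\vect{m}_{-i}^{\ast})$ over $\vect{m}_i\in\mathcal{M}_i$. First I would record which quantities are frozen in this program: $w_{-i,l}^{\ast}$, $p_{-i,l}^{\ast}$ and $\phi_{i,l}^{\ast}$ depend only on $\vect{m}_{-i}^{\ast}$ and are constants, whereas $\bar{p}_l$ contains the decision variable $p_{i,l}$ through $\partial\bar{p}_l/\partial p_{i,l}=1/|\mathcal{A}_l|$. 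I would also note that the indicator/barrier term in \eqref{tax} is identically zero on the feasible region $\mathcal{E}_{-i,l}^{\ast}+x_{i,l}^{\ast}=X_l^{\ast}-c_l\le 0$ and blows up otherwise, so it plays no role in the smooth stationarity conditions except to guarantee $X_l^{\ast}\le c_l$.

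Differentiating the objective in $p_{i,l}$ (which enters only through the tax) gives, at an interior price,
\begin{equation}
\frac{2}{\kappa^{(n)}}\,\bigl(p_{i,l}^{\ast}-\bar{p}_l^{\ast}\bigr)\,\frac{|\mathcal{A}_l|-1}{|\mathcal{A}_l|}=w_{-i,l}^{\ast}\,\frac{X_l^{\ast}-c_l}{\gamma},\qquad X_l^{\ast}:=\sum_{k\in\mathcal{A}_l}x_{k,l}^{\ast}.
\end{equation}
The key manoeuvre is to sum this identity over $i\in\mathcal{A}_l$: the left-hand side collapses to zero because $\sum_{i\in\mathcal{A}_l}(p_{i,l}^{\ast}-\bar{p}_l^{\ast})=0$ by the definition of $\bar{p}_l^{\ast}$, leaving $(X_l^{\ast}-c_l)\sum_{i\in\mathcal{A}_l}w_{-i,l}^{\ast}=0$. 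Hence either the capacity binds, $X_l^{\ast}=c_l$, or all averaged prices vanish. In the first case the right-hand side above is zero for every $i$, forcing $p_{i,l}^{\ast}=\bar{p}_l^{\ast}$ and hence the common value $p_l^{\ast}$ of \eqref{nashlagt}; in the second case nonnegativity of prices forces every $p_{i,l}^{\ast}=0$, so \eqref{nashlagt} holds trivially. In both cases the product $w_{i,l}^{\ast}(X_l^{\ast}-c_l)/\gamma$ vanishes — the bracket vanishes when capacity binds, and $w_{i,l}^{\ast}=0$ otherwise — which is precisely \eqref{nashlagtt}.

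It then remains to differentiate in $x_{i,l}$. Using $\partial U_i/\partial x_{i,l}=\partial t_{i,l}/\partial x_{i,l}$ together with $\partial t_{i,l}/\partial x_{i,l}=w_{-i,l}^{\ast}-w_{-i,l}^{\ast}(p_{i,l}^{\ast}-p_{-i,l}^{\ast})/\gamma$, I obtain
\begin{equation}
\frac{\partial U_i(\vect{x}_i)}{\partial x_{i,l}}\bigg|_{\vect{m}=\vect{m}^{\ast}}=w_{-i,l}^{\ast}-\frac{w_{-i,l}^{\ast}\,(p_{i,l}^{\ast}-p_{-i,l}^{\ast})}{\gamma}.
\end{equation}
The consensus \eqref{nashlagt} already established gives $p_{i,l}^{\ast}=p_{-i,l}^{\ast}$, so the cross term drops out and \eqref{nashlagttt} follows, independently of the size of $\gamma$.

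The step I expect to be the main obstacle is the boundary behaviour, since the derivation above is written as if $x_{i,l}^{\ast}$ and $p_{i,l}^{\ast}$ were interior, whereas a priori $x_{i,l}^{\ast}$ may equal $0$ and the capacity may bind. The convention $U_i(\vect{z})=-\infty$ for negative $\vect{z}$ together with differentiability of $U_i$ is what I would use to keep $x_{i,l}^{\ast}$ away from the lower face so that the quantity condition holds with equality rather than as a one-sided inequality. The more delicate point is the capacity-binding case $X_l^{\ast}=c_l$: a naive application of KKT to agent $i$'s problem would attach a nonnegative multiplier to the one-sided barrier and yield only $\partial U_i/\partial x_{i,l}\ge w_{-i,l}^{\ast}$. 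The resolution I would develop is the market-clearing reading forced by the summation argument: the price condition makes aggregate demand equal exactly $c_l$ while each agent's smooth first-order condition sets marginal utility equal to the price $w_{-i,l}^{\ast}$, so no agent is individually rationed and the barrier — being zero on the feasible side of $X_l^{\ast}=c_l$ — contributes no multiplier, leaving equality in \eqref{nashlagttt}. I would finally check that the degenerate all-prices-zero branch is consistent, since there $w_{-i,l}^{\ast}=0$ and \eqref{nashlagttt} reduces to $\partial U_i/\partial x_{i,l}=0$.
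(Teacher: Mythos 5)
Your proposal is correct and follows essentially the same route as the paper's proof: differentiate the stationary-profile objective in $p_{i,l}$ and $x_{i,l}$, sum the price condition over $i\in\mathcal{A}_l$ using $\sum_{i\in\mathcal{A}_l}(p_{i,l}^{\ast}-\bar{p}_l^{\ast})=0$ to obtain \eqref{nashlagtt}, substitute back to get price consensus \eqref{nashlagt}, and then use consensus to reduce the quantity condition to \eqref{nashlagttt}. Your explicit case split (capacity binds versus all averaged prices vanish) is in fact the justification the paper leaves implicit when it passes from $\sum_{i}w_{-i,l}^{\ast}(X_l^{\ast}-c_l)/\gamma=0$ to the per-agent statement, and your remarks on the indicator term and boundary cases only make explicit what the paper glosses over.
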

In the rest of the paper, we restrict our attention to  the class of  stationary profiles defined as follows.
\begin{definition}
\label{effNE}
Any stationary profile $\vect{m}^*$ of the  mechanism is called efficient stationary profile if for any $i, i\in \mathcal{A}$ and $\l, l\in \mathcal{L},$ the following holds
\begin{eqnarray}
w_{i,l}^{\ast}=w_{j,l}^{\ast}=w_{-i,l}=:w_{l}^{\ast} \quad \ i,j \in \mathcal{A}_l, \ l \in \mathcal{L}.
\end{eqnarray}
\end{definition}
The following theorem  shows that the  mechanism proposed in Section \ref{gamef} is individually rational, i.e., agents voluntarily participate in the mechanism. 
\begin{theorem}
\label{ind rational} 
Every efficient stationary profile, $\vect{m}^*=(\vect{x}^\ast,\vect{t}^\ast)$, of the mechanism 
is weakly preferred by all  agents to the initial allocation
$(\vect{0}, 0)$.
\end{theorem}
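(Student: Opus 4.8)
The plan is to verify the individual-rationality inequality directly at an efficient stationary profile $\vect{m}^*$: since $U_i(\vect{0})=0$, it suffices to show $U_i(\vect{x}_i^*)-t_i^*\ge 0$ for every agent $i$, where $t_i^*=\sum_{l\in\mathcal{L}_i}t_{i,l}^*$. The engine of the argument is the combination of the three identities of Lemma \ref{lemma5} with the concavity of $U_i$ and the normalization $U_i(\vect{0})=0$; the time-dependent coefficients are understood in the limit $n\to\infty$ along the stationary profile.

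First I would reduce the tax $t_{i,l}^*$ of \eqref{tax} using Lemma \ref{lemma5}. By the price-consensus identity \eqref{nashlagt}, every agent requesting good $l$ bids the common price $p_l^*$, so $p_{i,l}^*=\bar p_l^*=p_{-i,l}^*$; this annihilates both the price-matching penalty $\frac{1}{\kappa^{(n)}}|p_{i,l}^*-\bar p_l^*|^2$ and the joint term carrying the factor $(p_{i,l}^*-p_{-i,l}^*)$. For the infinite capacity penalty I would argue it is never active at a stationary profile: were an agent incurring it, deviating to $x_{i,l}=0$ would strictly raise its payoff, contradicting stationarity. Hence aggregate demand never strictly exceeds $c_l$ (the feasibility reflected in the complementary-slackness identity \eqref{nashlagtt}), the penalty term is zero, and what remains is $t_{i,l}^*=w_{-i,l}^*x_{i,l}^*+\phi_{i,l}^*\mathcal{I}\{|\mathcal{A}_l|>2\}$, together with the externally assigned subsidy $Q^{(n)}$ of \eqref{5005} for the single agent tied to each good with $|\mathcal{A}_l|=3$.

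Second I would apply concavity. Because $U_i$ is concave and differentiable, the supporting-hyperplane inequality at $\vect{x}_i^*$ evaluated at $\vect{0}$ reads $U_i(\vect{0})\le U_i(\vect{x}_i^*)+\nabla U_i(\vect{x}_i^*)^{\top}(\vect{0}-\vect{x}_i^*)$, which with $U_i(\vect{0})=0$ gives $\nabla U_i(\vect{x}_i^*)^{\top}\vect{x}_i^*\le U_i(\vect{x}_i^*)$. The marginal-rate identity \eqref{nashlagttt} states $\partial U_i/\partial x_{i,l}\big|_{\vect{m}^*}=w_{-i,l}^*$, so $\nabla U_i(\vect{x}_i^*)^{\top}\vect{x}_i^*=\sum_{l\in\mathcal{L}_i}w_{-i,l}^*x_{i,l}^*$. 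Combining, $U_i(\vect{x}_i^*)-\sum_{l\in\mathcal{L}_i}w_{-i,l}^*x_{i,l}^*\ge 0$; that is, the payoff is already nonnegative once the budget-balancing terms are set aside.

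Finally I would control the residual terms $\phi_{i,l}^*$ and $Q^{(n)}$. Inspecting \eqref{bterm}, every summand except one carries a factor $1/\kappa^{(n)}$ or $1/\gamma$, and since $\kappa^{(n)}\to\infty$ and $\gamma$ is chosen sufficiently large these are $o(1)$ or arbitrarily small; the single term without such a factor reduces at the efficient profile (where all $w_{k,l}^*$ coincide with $w_l^*\ge 0$ and all $x_{j,l}^*\ge 0$) to a nonpositive quantity, so $\phi_{i,l}^*\le o(1)$ and acts as a net subsidy rather than a charge. Thus $-\sum_l\phi_{i,l}^*$ does not spoil the nonnegativity obtained above, and $U_i(\vect{x}_i^*)-t_i^*\ge 0$ follows. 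I expect the main obstacle to be precisely this last bookkeeping of $\phi_{i,l}^*$ and the $Q$-subsidies: one must simultaneously justify that the infinite penalty is inactive at stationarity (so that dropping it is legitimate) and that the complicated $\Upsilon_3$-type corrections, engineered for budget balance, carry the right sign or vanish and hence cannot push the net payoff below zero. The concavity step and the substitution via \eqref{nashlagttt} are, by contrast, immediate.
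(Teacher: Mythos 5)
Your proof is correct (at the level of rigor the paper itself operates at), but it takes a genuinely different route from the paper's. The paper never touches concavity of $U_i$ or the marginal condition \eqref{nashlagttt}: it instead exhibits an explicit unilateral deviation $\vect{m}_i=(\vect{0},\eta_{i,l_1},\dots,\eta_{i,l_{|\mathcal{L}_i|}})$, where $\eta_{i,l}$ is chosen as the positive root of the quadratic (in $p_{i,l}$) equation $t_{i,l}|_{\vect{m}=(\vect{m}_i,\vect{m}^*_{-i})}=0$, so that the deviation payoff is exactly $U_i(\vect{0})-0=0$; the stationarity inequality then gives $U_i(\vect{x}_i^*)-t_i^*\ge 0$ directly. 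You instead lower-bound the equilibrium payoff itself: the gradient inequality $U_i(\vect{x}_i^*)\ge \nabla U_i(\vect{x}_i^*)^{\top}\vect{x}_i^*$ combined with \eqref{nashlagttt} absorbs the linear charge $\sum_l w^*_{-i,l}x^*_{i,l}$, and the sign of $\phi^*_{i,l}$ does the rest. The two arguments share the one genuinely delicate step, namely evaluating $\phi_{i,l}$ at the efficient profile: the paper's Eq. \eqref{eq33} asserts $\phi^*_l=-w^*_l x^*_{-i,l}$, silently discarding the same $O(1/\gamma)$ residue you flag as the "main obstacle," so you are not worse off there. Your route buys a quantitatively stronger conclusion (the surplus is at least $\sum_l w^*_l x^*_{-i,l}\ge 0$, modulo the $\gamma$-residue) and an explicit treatment of the capacity penalty's inactivity at stationarity, which the paper's Lemma \ref{lemma5} glosses over; the paper's route buys independence from concavity and from the interior first-order condition, needing only the definition of a stationary profile plus the solvability of the quadratic (its discriminant is nonnegative since $w^*_l,x^*_{-i,l}\ge 0$). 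One caution on your version: \eqref{nashlagttt} is an interior condition, and at a corner $x^*_{i,l}=0$ it may hold only as an inequality $\partial U_i/\partial x_{i,l}\le w^*_{-i,l}$; your argument survives because that coordinate contributes $0$ to $\nabla U_i(\vect{x}_i^*)^{\top}\vect{x}_i^*$ anyway, but this deserves a sentence.
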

By the following theorem, we show that, along the allocation process,  the sum of taxes paid by some of the
agents is always equal to the sum of the money (subsidies) received by the rest of the agents
participating in the allocation process. 
\begin{theorem}
\label{lemma3}
The proposed mechanism is always budget balanced at every feasible allocation. That is, the mechanism is budget-balanced at all allocations. 
\end{theorem}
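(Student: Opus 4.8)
\emph{Step 1 (reduction to a single good).} The plan is to prove $\sum_{i\in\mathcal{A}}t_i^{(n)}=0$ by localizing the computation to each good. Since every agent's total tax is $t_j^{(n)}=\sum_{l\in\mathcal{L}_j}t_{j,l}^{(n)}$, augmented by the subsidy $Q^{(n)}_{\{l:|\mathcal{A}_l|=3\}}$ exactly when $j$ is one of the chosen agents $k_l$, and since $j\in\mathcal{A}_l$ if and only if $l\in\mathcal{L}_j$, interchanging the order of summation gives
\begin{eqnarray}
\sum_{j\in\mathcal{A}} t_j^{(n)}=\sum_{l\in\mathcal{L}}\Bigg(\sum_{i\in\mathcal{A}_l}t_{i,l}^{(n)}+\mathcal{I}\{|\mathcal{A}_l|=3\}\,Q^{(n)}_{\{l:|\mathcal{A}_l|=3\}}\Bigg).\nonumber
\end{eqnarray}
Hence it suffices to show that the bracketed per-good quantity vanishes for every $l\in\mathcal{L}$; summing over $l$ then yields budget balance.

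\emph{Step 2 (the penalty terms drop out at feasibility).} First I would dispose of the infinite-penalty term in \eqref{tax}. At a \emph{feasible} allocation $\sum_{j\in\mathcal{A}_l}x_{j,l}^{(n)}\le c_l$, so $\mathcal{E}_{-i,l}^{(n)}+x_{i,l}^{(n)}=\sum_{j\in\mathcal{A}_l}x_{j,l}^{(n)}-c_l\le 0$ and the indicator $1\{\mathcal{E}_{-i,l}^{(n)}+x_{i,l}^{(n)}>0\}$ equals $0$ for every $i$ and $l$. Therefore the term
\[
\frac{1\{x_{i,l}^{(n)}>0\}1\{\mathcal{E}_{-i,l}^{(n)}+x_{i,l}^{(n)}>0\}}{1-1\{x_{i,l}^{(n)}>0\}1\{\mathcal{E}_{-i,l}^{(n)}+x_{i,l}^{(n)}>0\}}
\]
vanishes identically, and each $t_{i,l}^{(n)}$ collapses to $\Upsilon_1^{(n)}$, the first two pieces of $\Upsilon_2^{(n)}$, and $\phi_{i,l}^{(n)}\mathcal{I}\{|\mathcal{A}_l|>2\}$. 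This is precisely why budget balance is asserted only at feasible allocations: the penalty is a nonnegative, non-redistributed charge that would otherwise leave money unaccounted for.

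\emph{Step 3 (cancellation for $|\mathcal{A}_l|\ge 4$, the crux).} Fix a good $l$ with $|\mathcal{A}_l|\ge 4$, so the denominators $|\mathcal{A}_l|-2$ and $|\mathcal{A}_l|-3$ in \eqref{bterm} are nonzero. The whole proof reduces to the algebraic identity
\begin{eqnarray}
\sum_{i\in\mathcal{A}_l}\phi_{i,l}^{(n)}&=&-\sum_{i\in\mathcal{A}_l}\Bigg[w_{-i,l}^{(n)}x_{i,l}^{(n)}+\frac{1}{\kappa^{(n)}}\big|p_{i,l}^{(n)}-\bar{p}_l^{(n)}\big|^2\nonumber\\
&&\qquad-w_{-i,l}^{(n)}\big(p_{i,l}^{(n)}-p_{-i,l}^{(n)}\big)\frac{\sum_{j\in\mathcal{A}_l}x_{j,l}^{(n)}-c_l}{\gamma}\Bigg],\nonumber
\end{eqnarray}
which together with Steps 1--2 forces $\sum_{i\in\mathcal{A}_l}t_{i,l}^{(n)}=0$. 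To verify it I would substitute the definitions of $w_{-i,l}^{(n)}$, $p_{-i,l}^{(n)}$ and $\bar{p}_l^{(n)}$, expand $|p_{i,l}^{(n)}-\bar p_l^{(n)}|^2$ and the products, and sort the result into the symmetric single, double and triple index sums $\sum_{j\neq i}$, $\sum_{j\neq i}\sum_{k\neq i,j}$ and $\sum_{j\neq i}\sum_{k\neq i,j}\sum_{r\neq i,j,k}$ over $\mathcal{A}_l$; term by term these match the seven groups defining $\phi_{i,l}^{(n)}$ in \eqref{bterm}. I expect this to be the main obstacle: it is an entirely mechanical but lengthy bookkeeping exercise, whose only real content is checking that the combinatorial weights $1/(|\mathcal{A}_l|-1)$, $1/(|\mathcal{A}_l|-2)$, $1/(|\mathcal{A}_l|-3)$ are exactly those generated when the outer $\sum_i$ is interchanged with the inner index sums and each ordered tuple is counted with the correct multiplicity.

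\emph{Step 4 (the boundary case $|\mathcal{A}_l|=3$).} When $|\mathcal{A}_l|=3$ the triple sums in \eqref{bterm} are empty while their denominators $|\mathcal{A}_l|-3$ vanish, so the clean cancellation of Step 3 is unavailable and $\phi_{i,l}^{(n)}$ is not used. Instead, the subsidy $Q^{(n)}_{\{l:|\mathcal{A}_l|=3\}}$ of \eqref{5005}, assigned to the randomly chosen agent $k_l\in\mathcal{A}_l$, is constructed to supply precisely the residual $-\sum_{i\in\mathcal{A}_l}(\Upsilon_1^{(n)}+\text{the first two pieces of }\Upsilon_2^{(n)})$ remaining after Step 2, so that the bracketed per-good quantity of Step 1 again vanishes; substituting \eqref{5005} and cancelling (the $C$-type term being $o(1)$ for $\gamma$ large) establishes this. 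Finally, summing the per-good identities of Steps 3 and 4 over all $l\in\mathcal{L}$ through the reduction of Step 1 yields $\sum_{i\in\mathcal{A}}t_i^{(n)}=0$, i.e.\ budget balance at every feasible allocation, which is the assertion of the theorem.
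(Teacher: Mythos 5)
Your proposal is correct and follows essentially the same route as the paper: reduce to the per-good sum $\sum_{i\in\mathcal{A}_l}t_{i,l}^{(n)}$ and observe that $\phi_{i,l}^{(n)}$ is constructed so that $\sum_{i\in\mathcal{A}_l}\phi_{i,l}^{(n)}$ exactly cancels the sum of the remaining tax terms (the paper records this through the intermediate identities $\sum_{i}\Psi_k(i)$, which it likewise leaves to ``a little algebra,'' so your deferral of the bookkeeping in Step 3 is at the same level of rigor). Your Steps 2 and 4 --- that the infinite-penalty term vanishes precisely because the allocation is feasible, and that goods with $|\mathcal{A}_l|=3$ must be balanced by the subsidy $Q^{(n)}_{\{l:|\mathcal{A}_l|=3\}}$ rather than by $\phi_{i,l}^{(n)}$ --- make explicit two points the paper's proof silently passes over.
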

Finally, we show that the mechanism converges to an allocation corresponding to a solution of the centralized allocation problem \textbf{Max1}.
 \begin{theorem}
 \label{th1}
 The sequence $\{w_{i,l}^{(n)}\}_{n=1}^{\infty}$ tends to the corresponding Lagrangian multipliers of the centralized problem \textbf{Max1} and $\{x_{i,l}^{n}\}_{n=1}^{\infty}$ converges to the optimal solution of \textbf{Max1}. In other words, the proposed mechanism converges to an efficient stationary profile that corresponds to the solution of \textbf{Max1}.
 \end{theorem}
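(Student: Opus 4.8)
The plan is to prove convergence in three stages: first characterize the optimizers of \textbf{Max1} through their Karush--Kuhn--Tucker (KKT) conditions; second, recast the averaged-price dynamics as a diminishing-step recursion whose rest points coincide with the efficient stationary profiles of Lemma~\ref{lemma5}; and third, invoke the ODE/Lyapunov method to show the recursion converges to such a rest point, at which the limiting allocation is necessarily a KKT point of \textbf{Max1}. Starting with the reduction: by budget balance (Theorem~\ref{lemma3}) and the purely redistributive role of the designer, $\sum_i t_i=0$, so the objective $\sum_i[U_i(\vect{x}_i)-t_i]$ collapses to $\sum_i U_i(\vect{x}_i)$ and \textbf{Max1} becomes the concave program $\max\sum_i U_i(\vect{x}_i)$ subject to $\sum_{i\in\mathcal{A}_l}x_{i,l}\le c_l$ for each $l$. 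Since the $U_i$ are concave and differentiable and the constraints are linear, Slater's condition holds and a feasible $\vect{x}^\ast$ is optimal iff there exist multipliers $\lambda_l\ge 0$ with $\partial U_i/\partial x_{i,l}=\lambda_l$ for all $i\in\mathcal{A}_l$ and $\lambda_l(\sum_{i\in\mathcal{A}_l}x_{i,l}^\ast-c_l)=0$. Comparing with \eqref{nashlagt}--\eqref{nashlagttt}, an efficient stationary profile (Definition~\ref{effNE}) satisfies exactly this KKT system under the identification $\lambda_l=w_l^\ast$: price equalization yields a single multiplier per good, \eqref{nashlagttt} gives stationarity, and \eqref{nashlagtt} gives complementary slackness. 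Hence it suffices to prove the trajectory converges to an efficient stationary profile.

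The heart of the argument is a stochastic-approximation reading of the weight dynamics. Writing $a^{(n+1)}:=\theta^{(n+1)}/\kappa^{(n+1)}$, the definition of $w_{i,l}^{(n)}$ telescopes to
\begin{equation}
w_{i,l}^{(n+1)}=w_{i,l}^{(n)}+a^{(n+1)}\bigl(p_{i,l}^{(n+1)}-w_{i,l}^{(n)}\bigr),
\end{equation}
a Robbins--Monro-type recursion (deterministic here) driven by the best-response price $p_{i,l}^{(n+1)}$. With $\theta^{(n)}=1/n$ one has $\kappa^{(n)}\sim\ln n$, so $\sum_n a^{(n)}=\infty$ while $\sum_n (a^{(n)})^2<\infty$, the standard step-size conditions. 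I would then extract the per-stage best response from the first-order conditions of the inner maximization \eqref{1}: optimizing over $x_{i,l}$ gives $\partial U_i/\partial x_{i,l}=w_{-i,l}^{(n)}\bigl(1-(p_{i,l}-p_{-i,l}^{(n)})/\gamma\bigr)$, and optimizing over $p_{i,l}$ gives a price update proportional to the excess demand $\sum_{j\in\mathcal{A}_l}x_{j,l}-c_l$. Because $\gamma$ is large, the demand condition is a small perturbation of $\partial U_i/\partial x_{i,l}=w_{-i,l}^{(n)}$, so the inner problem defines a continuous, single-valued (by concavity) map $\vect{x}^{(n+1)}=X(w^{(n)})$, while the price update acts as a dual-ascent step on the capacity constraint. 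This is a two-timescale picture: demands react essentially instantaneously to the slowly varying averaged prices $w^{(n)}$.

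To close the convergence, I would use the ODE method. The associated limiting ODE is $\dot{w}_{i,l}=p_{i,l}^{\mathrm{BR}}(w)-w_{i,l}$, whose equilibria are exactly the points where reported price equals averaged price and where, through the excess-demand term, slackness and feasibility are consistent---that is, the efficient stationary profiles. The natural Lyapunov function is the dual function $D(\lambda)=\max_{\vect{x}\ge 0}\sum_i U_i(\vect{x}_i)-\sum_l\lambda_l(\sum_{i}x_{i,l}-c_l)$ of the reduced \textbf{Max1}, evaluated at $\lambda=w$: the price averaging realizes subgradient-averaging descent on $D$, which is convex with minimizer at the optimal multipliers. Boundedness is automatic since $0\le p_{i,l}\le M$ and $0\le x_{i,l}\le c_l$ confine the iterates to a compact box, so the diminishing-step convergence theorem applies and $w^{(n)}\to w^\ast=\lambda$. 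Continuity of $X$ then yields $\vect{x}^{(n)}=X(w^{(n-1)})\to X(\lambda)=\vect{x}^\ast$, the optimal allocation, completing the proof.

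I expect the main obstacle to be the last stage: verifying rigorously that the coupled demand/price dynamics carry the claimed dual-Lyapunov structure. In particular one must handle the non-smooth indicator penalty in \eqref{tax} (which switches on only off-capacity and should be inactive in a neighborhood of the limit, but must be shown so), establish that $X$ is globally well defined and single-valued when some $U_i$ are merely weakly concave (otherwise $\vect{x}^{(n)}$ converges only to the optimal set), and prove asymptotic stability of the ODE uniformly over the compact message box rather than merely locally, so that the trajectory cannot stall at a spurious boundary rest point.
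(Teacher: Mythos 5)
Your endgame---identifying an efficient stationary profile with a KKT point of \textbf{Max1} via the conditions of Lemma~\ref{lemma5} under the identification $\lambda_l = w_l^{\ast}$---is exactly how the paper closes its proof. But the core of the theorem is the convergence of the iterates to such a profile, and there your argument has a genuine gap rather than a complete alternative. You recast the weight update as a Robbins--Monro recursion $w^{(n+1)}=w^{(n)}+a^{(n+1)}(p^{(n+1)}-w^{(n)})$ and appeal to the ODE/Lyapunov method, but the limiting ODE $\dot w = p^{\mathrm{BR}}(w)-w$ is not well defined: the driving term $p_{i,l}^{(n+1)}$ is the maximizer of the prox-regularized stage problem, which depends on the \emph{instantaneous} state $(\vect{x}^{(n)},\vect{p}^{(n)},\bar p_l^{(n)})$ and not on $w^{(n)}$ alone, so there is no single-timescale mean field to stabilize; your ``demands react instantaneously'' two-timescale claim is asserted, not established (demands and prices are updated at the same rate by the same argmax). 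Likewise the claims that the averaging realizes subgradient descent on the dual function, that the best-response map $X(\cdot)$ is single-valued and continuous for merely concave $U_i$, and that the ODE equilibria are globally asymptotically stable are all left unproven---you flag them yourself as the ``main obstacle,'' which is precisely the content of the theorem.

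The paper avoids this machinery entirely. It treats each stage as a proximal step with penalty $\tfrac{1}{\kappa^{(n)}}|p_{i,l}-\bar p_l^{(n)}|^2$ and invokes the variational-inequality characterization of such minimizers (Theorem 1.6 of \cite{lion}) to get Lemma~\ref{lem1}; summing that inequality over agents and telescoping over time bounds the accumulated proximal residuals (Lemma~\ref{lem2}); this forces the time-averaged prices of all agents to reach consensus, $|w_{i,l}^{(n)}-w_l^{(n)}|\to 0$ (Lemma~\ref{lem3}, Corollary~\ref{cor1}); and passing to the limit in the summed inequality shows the common limit $w_l^{\ast}$ maximizes $\sum_i\Upsilon_i$ over $\Delta$, after which the KKT verification you describe goes through. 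If you want to salvage your route, you would need to prove the Lyapunov descent property for the coupled dynamics directly; as written, the step from the recursion to ``$w^{(n)}\to\lambda^{\ast}$'' is the theorem itself and is not justified.
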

\end{section}
\begin{section}{Conclusion}
\label{con}
In this paper we have proposed a  mechanism for allocating  divisible goods with  capacity constraints  to  non-price-taking agents with general  concave utility which possesses the following desirable properties.
(i) The mechanism is always budget-balanced at every allocation.
(ii) The mechanism converges to an stationary profile (efficient stationary profile) that corresponds to an optimal solution of the corresponding centralized problem.
(iii) Every stationary profile (efficient stationary profile) resulted from the mechanism is weakly preferable than initial endowment of each agent.
\subsection*{Acknowledgments}
 The author gratefully acknowledges stimulating discussions with Demos Teneketzis.
\end{section}



\bibliographystyle{IEEEtran}
\bibliography{references}



\appendix

\begin{proof}[\textbf{Proof of Lemma \ref{lemma5}}]
Consider agent $i \in \mathcal{A}_l$. Since agent $i$ does not control $\phi_{i,l}$ defined by \eqref{bterm}, (i.e. $\phi_{i,l}$ does not depend on $x_{i,l}$ and $p_{i,l}$),
\begin{eqnarray}
\label{333}
\frac{\partial \phi_{i,l}}{\partial
x_{i,l}}=\frac{\partial \phi_{i,l}}{\partial
p_{i,l}}=0.
\end{eqnarray}
Now, Let $\vect{m}^*$ be a stationary profile, then 
\begin{eqnarray}
\label{eq1}
&&\frac{\partial \{U_i(\vect{x}_i)-\sum_{l \in \mathcal{L}_i}t_{i,l}\}}{\partial x_{i,l}}|_{\vect{m}=\vect{m}^*}=0 \\
\label{eq2}
&&\frac{\partial \{U_i(\vect{x}_i)-\sum_{l \in \mathcal{L}_i}t_{i,l}\}}{\partial p_{i,l}}|_{\vect{m}=\vect{m}^*}=0. 
\end{eqnarray}
Equation \eqref{eq2} implies that 
\begin{eqnarray*}
\label{eq3}
2\frac{1- \frac{1}{|\mathcal{A}_l|}}{\kappa^{(SP)}}\left(p_{i,l}^*-\bar{p}_l^*\right)-w_{-i,l}^*\left(\frac{x_i^*+\sum_{j \in \mathcal{A}_l, j\neq i}x_j^*-c_l}{\gamma}\right)=0,
\end{eqnarray*} 
where $\kappa^{(SP)}$ means at the stationary profile $\vect{m}^*$.
Taking a sum over all $i, i \in \mathcal{A}_l,$ and using the fact that $\sum_{i \in \mathcal{A}_l}(p_{i,l}^*-\bar{p}_l^*)=0$, result in
\begin{eqnarray}
\sum_{i \in \mathcal{A}_l}w_{-i,l}^*\left(\frac{x_i^*+\sum_{j \in \mathcal{A}_l, j\neq i}x_j^*-c_l}{\gamma}\right)=0.
\end{eqnarray}
Therefore, for every $i,$
\begin{eqnarray}
\label{eq4}
w_{-i,l}^*\left(\frac{x_i^*+\sum_{j \in \mathcal{A}_l, j\neq i}x_j^*-c_l}{\gamma}\right)=0.
\end{eqnarray}
Substituting \eqref{eq4} in \eqref{eq3} gives 
\begin{eqnarray}
\label{eq5}
p_{i,l}^*=\bar{p}_l^*, \quad \quad \forall \   l \in \mathcal{L},  \  i \in \mathcal{A}_l,
\end{eqnarray}  
and consequently, 
\begin{eqnarray}
\label{eq6}
p_{i,l}^*=p_{j,l}^*=p_{-i,l}^*, \quad \quad \forall\  i, \ j \in \mathcal{A}_l, \  l \in \mathcal{L}.
\end{eqnarray}
Equation \eqref{eq1} together with \eqref{eq6} and form of the tax (Eq. \eqref{tax}) imply that
\begin{eqnarray}
\label{eq7}
\frac{\partial U_i(\vect{x}_i)}{\partial x_{i,l}}|_{\vect{m}=\vect{m}^*}=w_{-i,l}^*.
\end{eqnarray}
\end{proof}

\begin{proof}[\textbf{Proof of Theorem \ref{ind rational}}]
 As it is pointed out in section \ref{model}, for simplicity we assume  for every $i, i\in \mathcal{A},$ $U_i(\vect{0})=0,$ and initial endowment of each agent is zero.
Let $\vect{m}^*=(\vect{x}^\ast,\vect{t}^\ast)$ be a  stationary profile then we have 
\begin{eqnarray}
\label{nash}
\{U_i(\vect{x}_i)-t_i\}|_{\vect{m}=\vect{m}^*}\geq \{U_i(\vect{x}_i)-t_i\}|_{\vect{m}=(\vect{m}_{i},\vect{m}_{-i}^*)} 
\end{eqnarray}

Thus, it is enough to find $\vect{m}_{i}=(\vect{x}_i,\vect{p}_i)=(x_{i,l_1},x_{i,l_2},\cdots,x_{i,l_{|\mathcal{L}_i|}},p_{i,l_1},p_{i,l_2},\cdots,p_{i,l_{|\mathcal{L}_i|}}), l_k \in \mathcal{L}_i, 1 \leq k \leq |\mathcal{L}_i|,$ such that $\{U_i(\vect{x}_i)-t_i\}|_{\vect{m}=(\vect{m}_{i},\vect{m}_{-i}^*)}$ becomes zero. To do so, choose 
\begin{eqnarray}
\label{eq30}
\vect{x}_i=\vect{0}=(0,0,\cdots,0),
\end{eqnarray}
and for any $l, l\in \mathcal{L}_i$, choose $p_{i,l}=\eta_{i,l}$, 

\begin{eqnarray}
\label{eq31}
\eta_{i,l}&=&\frac{\sqrt{\left(w_l^*\frac{\mathcal{E}_{-i,l}^*}{\gamma}\right)^2+4\frac{1}{\kappa^{(SP)}}\left(\frac{|\mathcal{A}_l|-1}{|\mathcal{A}_l|}\right)^2w_l^*x_{-i,l}^*}+2\frac{1}{\kappa^{(SP)}}\left(\frac{|\mathcal{A}_l|-1}{|\mathcal{A}_l|}\right)^2p_l^*+w_l^*\frac{\mathcal{E}_{-i,l}^*}{\gamma}}{2\frac{1}{\kappa^{(SP)}}\left(\frac{|\mathcal{A}_l|-1}{|\mathcal{A}_l|}\right)^2}
\end{eqnarray}
where $\eta_{i,l}$ is the positive root of the following quadratic polynomial with respect to $p_{i,l}$,
\begin{eqnarray*}
\label{eq32}
t_{i,l}|_{\vect{m}=(\vect{m}_{i},\vect{m}_{-i}^*)}&=&\frac{1}{\kappa^{(SP)}}\left(\frac{|\mathcal{A}_l|-1}{|\mathcal{A}_l|}\right)^2(p_{i,l}-p_l^*)^2
-w_{l}^*(p_{i,l}-p_l^*)\left(\frac{\mathcal{E}_{-i,l}^*}{\gamma}\right)+\phi_l^* \nonumber \\
&=&p_{i,l}^2\left[\frac{1}{\kappa^{(SP)}}\left(\frac{|\mathcal{A}_l|-1}{|\mathcal{A}_l|}\right)^2\right]-p_{i,l}\left[2p_l^*\frac{1}{\kappa^{(SP)}}\left(\frac{|\mathcal{A}_l|-1}{|\mathcal{A}_l|}\right)^2+w_l^*\frac{\mathcal{E}_{-i,l}}{\gamma}\right] \nonumber \\
&&+ \left[\frac{1}{\kappa^{(SP)}}\left(\frac{|\mathcal{A}_l|-1}{|\mathcal{A}_l|}\right)^2(p_l^*)^2+w_l^*p_l^*\frac{\mathcal{E}_{-i,l}^*}{\gamma}-w_l^*x_{-i,l}^*\right],
\end{eqnarray*}
where 
\begin{eqnarray}
\label{eq33}
\phi_l^*&=&-w_l^*x_{-i,l}^*,\\
x_{-i,l}^*&=&\frac{\sum_{j \in \mathcal{A}_l,j \neq i}x_j^*}{|\mathcal{A}_l|-1}.
\end{eqnarray} 
Therefore, choosing $\vect{m}_{i}=(\vect{0},\eta_{i,l_1},\eta_{i,l_2},\cdots,\eta_{i,l_{|\mathcal{L}_i|}}), l_k \in \mathcal{L}_i, 1 \leq k \leq |\mathcal{L}_i|,$ implies that
\begin{eqnarray}
\{U_i(\vect{x}_i)-t_i\}|_{\vect{m}=(\vect{m}_{i},\vect{m}_{-i}^*)}=U_i(\vect{0})-\sum_{l, l\in \mathcal{L}_i}t_{i,l}|_{\vect{m}=(\vect{m}_{i},\vect{m}_{-i}^*)}=0.
\end{eqnarray}
Consequently, 
\begin{eqnarray}
\{U_i(\vect{x}_i)-t_i\}|_{\vect{m}=(\vect{m}^*)}\geq 0, \quad \quad \forall i.
\end{eqnarray}
\end{proof}

\begin{proof}[\textbf{Proof of Theorem \ref{lemma3}}]
We want to show that $\sum_{i, i \in \mathcal{A}}t_i=\sum_{l, l \in \mathcal{L}}\left[\sum_{i, i\in \mathcal{A}_l}t_{i,l}\right]=0$. To prove this, we show that $\sum_{i, i\in \mathcal{A}_l}t_{i,l}=0$ for any $l, l \in \mathcal{L}$. 
By a little algebra we can show the following equalities,
\begin{eqnarray}
\label{forty5}
\sum_{i \in \mathcal{A}_l}w_{-i,l}^{(n)}x_{i,l}^{(n)}&=&\sum_{i \in \mathcal{A}_l}\Psi_1(i), \\
\sum_{i \in \mathcal{A}_l}\frac{1}{\kappa^{(n)}}|p_{i,l}^{(n)}-\bar{p}_l^{(n)}|^2&=&\sum_{i \in \mathcal{A}_l}\Psi_2(i),\\ 
\sum_{i \in \mathcal{A}_l}\left[-w_{-i,l}^{(n)}p_{i,l}^{(n)}\frac{x_{i,l}^{(n)}}{\gamma}\right]&=&\sum_{i \in \mathcal{A}_l}\Psi_3(i), 
\end{eqnarray}
\begin{eqnarray}
\sum_{i \in \mathcal{A}_l}\left[-w_{-i,l}^{(n)}p_{i,l}^{(n)}\frac{\mathcal{E}_{-i,l}^{(n)}}{\gamma}\right]&=&\sum_{i \in \mathcal{A}_l}\Psi_4(i), \\
\label{fforty5}
\sum_{i \in \mathcal{A}_l}\left[w_{-i,l}^{(n)}p_{-i,l}^{(n)}\frac{x_{i,l}^{(n)}}{\gamma}\right]&=&\sum_{i \in \mathcal{A}_l}\Psi_5(i), 
\end{eqnarray}
where 
\begin{eqnarray*}
\Psi_1(i)=\frac{\sum_{\substack{j \in \mathcal{A}_l\\ j \neq i}}\sum_{\substack{k \in \mathcal{A}_l \\ k\neq i, j}}x_{j,l}^{(n)}w_{k,l}^{(n)}}{\left(|\mathcal{A}_l|-2\right)\left(|\mathcal{A}_l|-1\right)}, 
\end{eqnarray*}
\begin{eqnarray*}
\Psi_2(i)=\frac{1}{\kappa^{(n)}|\mathcal{A}_l|^2}\Bigg[\left(|\mathcal{A}_l|-1\right)\sum_{\substack{j\in \mathcal{A}_l \\ j \neq i}}(p_j^{(n)})^2+\left(\sum_{\substack{j\in \mathcal{A}_l \\ j \neq i}}p_j^{(n)}\right)^2 -\frac{2\left(|\mathcal{A}_l|-1\right)\sum_{\substack{j \in \mathcal{A}_l\\ j \neq i}}\sum_{\substack{k \in \mathcal{A}_l \\ k\neq i, j}}p_{j,l}^{(n)}p_{k,l}^{(n)}}{|\mathcal{A}_l|-2}\Bigg],
\end{eqnarray*}
\begin{eqnarray*}
\Psi_3(i)=-\frac{\sum_{\substack{j \in \mathcal{A}_l\\ j \neq i}}\sum_{\substack{k \in \mathcal{A}_l \\ k\neq i, j}}p_{j,l}^{(n)}x_{j,l}^{(n)}w_{k,l}^{(n)}}{\gamma \left(|\mathcal{A}_l|-2\right)\left(|\mathcal{A}_l|-1\right)},
\end{eqnarray*}
\begin{eqnarray*}
\Psi_4(i)=-\Bigg[\frac{\sum_{\substack{j \in \mathcal{A}_l\\ j \neq i}}\sum_{\substack{k \in \mathcal{A}_l \\ k\neq i, j}}\sum_{\substack{r \in \mathcal{A}_l \\ r\neq i, j,k}}p_{r,l}^{(n)}\mathcal{E}_{j,l}^{(n)}w_{k,l}^{(n)}}{(|\mathcal{A}_l|-1)^2 \gamma (|\mathcal{A}_l|-3)}
+\frac{\sum_{\substack{j \in \mathcal{A}_l\\ j \neq i}}\sum_{\substack{k \in \mathcal{A}_l \\ k\neq i, j}}p_{k,l}^{(n)}\mathcal{E}_{j,l}^{(n)}w_{j,l}^{(n)}}{(|\mathcal{A}_l|-1)^2 \gamma (|\mathcal{A}_l|-2)}\Bigg],
\end{eqnarray*}
\begin{eqnarray}
\label{ffforty5}
\Psi_5(i)= \Bigg[\frac{\sum_{\substack{j \in \mathcal{A}_l\\ j \neq i}}\sum_{\substack{k \in \mathcal{A}_l \\ k\neq i, j}}\sum_{\substack{r \in \mathcal{A}_l \\ r\neq i, j,k}}p_{k,l}^{(n)}x_{j}^{(n)}w_{r,l}^{(n)}}{(|\mathcal{A}_l|-1)^2 \gamma (|\mathcal{A}_l|-3)}
+\frac{\sum_{\substack{j \in \mathcal{A}_l\\ j \neq i}}\sum_{\substack{k \in \mathcal{A}_l \\ k\neq i, j}}p_{k,l}^{(n)}x_{j}^{(n)}w_{k,l}^{(n)}}{(|\mathcal{A}_l|-1)^2 \gamma (|\mathcal{A}_l|-2)}\Bigg].
\end{eqnarray}
Furthermore, 
\begin{eqnarray}
\label{bbr}
\phi_{i,l}^{(n)}
=-\Bigg\{\Psi_2(i)+\Psi_1(i)+\Psi_3(i)+\Psi_4(i)+\Psi_5(i)
+\frac{w_{-i,l}^{(n)}p_{-i,l}^{(n)}\mathcal{E}_{-i,l}^{(n)}}{\gamma}\Bigg\}.
\end{eqnarray}
Therefore, from Eqs. \eqref{forty5}-\eqref{bbr} we obtain that for any $l, l\in \mathcal{L},$ $\sum_{i, i\in \mathcal{A}_l}t_{i,l}=0$ because,
\begin{eqnarray}
&&\sum_{i, i \in \mathcal{A}_l}\Bigg\{w_{-i,l}^{(n)}x_{i,l}^{(n)}+\frac{1}{\kappa^{(n)}}|p_{i,l}^{(n)}-\bar{p}_l^{(n)}|^2 
-w_{-i,l}^{(n)}\left(p_{i,l}^{(n)}-p_{-i,l}^{(n)}\right)\left(\frac{x_{i,l}^{(n)}+\sum_{j\neq i, j\in \mathcal{A}_l}{x_{j,l}^{(n)}-c_l}}{\gamma}\right)\Bigg\}\nonumber \\&&+\sum_{i, i \in \mathcal{A}_l}\phi_{i,l}^{(n)}=0.
\end{eqnarray}
\end{proof}

\begin{proof}[\textbf{Proof of Theorem \ref{th1}}]
By rearranging \eqref{1} we have the following,
\begin{eqnarray}
U_i(\vect{x}_i^{(n)})-\sum_{l,l\in \mathcal{L}_i}t_{i,l}^{(n)}\Big|_{\vect{x}=\vect{x}^{(n)},\vect{p}_{-i}=\vect{p}_{-i}^{(n)}}
= \Upsilon_i(\vect{p}_i)-\sum_{l,l \in \mathcal{L}_i}\frac{|p_{i,l}-\bar{p}_{l}^{(n)}|^2}{\kappa^{(n)}}
\end{eqnarray}
where 
\begin{eqnarray*}
	\Upsilon_i(\vect{p}_i)= U_i(\vect{x}_i^{(n)})-\sum_{l \in \mathcal{L}_i}\Bigg\{w_{-i,l}^{(n)}x_{i,l}^{(n)}-w_{-i,l}^{(n)}\left(p_{i,l}-p_{-i,l}^{(n)}\right)\left(\frac{x_{i,l}^{(n)}+\sum_{j\neq i, j\in \mathcal{A}_l}{x_{j,l}^{(n)}-c_l}}{\gamma}\right) +\phi_{i,l}^{(n)}\Bigg\}.
\end{eqnarray*}

To prove the convergence, first we state and prove some Lemmas we need to prove the theorem.
\begin{lemma}
\label{lem0}
For any $l, l \in \mathcal{L}$ and $i, i \in \mathcal{A}_l$, there exists at least one subsequence (with the same indices) of $w_{i,l}^{(n)}$ which is convergent. 
\end{lemma}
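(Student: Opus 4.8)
The plan is to observe that each sequence $\{w_{i,l}^{(n)}\}_{n=1}^\infty$ is confined to the compact interval $[0,M]$ and then to invoke sequential compactness. First I would recall from the definition of $w_{i,l}^{(n)}$ that
$$w_{i,l}^{(n)}=\frac{\sum_{k=1}^n\theta^{(k)}p_{i,l}^{(k)}}{\kappa^{(n)}},\qquad \kappa^{(n)}=\sum_{k=1}^n\theta^{(k)},$$
so that $w_{i,l}^{(n)}$ is a convex combination of the reported prices $p_{i,l}^{(1)},\ldots,p_{i,l}^{(n)}$: the weights $\theta^{(k)}/\kappa^{(n)}$ are strictly positive, since each $\theta^{(k)}>0$, and they sum to one. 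Because the message space forces $0\le p_{i,l}^{(k)}\le M$ at every time slot, each $w_{i,l}^{(n)}$ is a weighted average of numbers lying in $[0,M]$, hence $0\le w_{i,l}^{(n)}\le M$ for all $n$. Thus $\{w_{i,l}^{(n)}\}_n$ is a bounded real sequence.

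Next I would extract a convergent subsequence. For a single pair $(i,l)$, boundedness together with the Bolzano--Weierstrass theorem immediately yields a subsequence $\{w_{i,l}^{(n_k)}\}_k$ that converges. To obtain the stronger statement that a single choice of indices works simultaneously for every pair $(i,l)$ with $i\in\mathcal{A}_l$ --- which is what ``with the same indices'' refers to --- I would use that there are only finitely many such pairs, at most $m\cdot|\mathcal{L}|$ of them. I would therefore apply Bolzano--Weierstrass iteratively, enumerating the pairs and passing to a further subsequence at each step, so that the final subsequence $\{n_k\}$ makes every component converge. Equivalently, I would package the quantities into the vector $\vect{w}^{(n)}=(w_{i,l}^{(n)})_{i\in\mathcal{A}_l,\,l\in\mathcal{L}}$, which lies in the compact product set $[0,M]^{d}$ with $d$ the number of pairs; sequential compactness of this finite-dimensional compact set then gives a single subsequence along which $\vect{w}^{(n_k)}$ converges, and convergence of the vector is exactly componentwise convergence along the common indices $n_k$.

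I do not anticipate a serious obstacle, since the result is essentially a boundedness-plus-compactness argument. The only point requiring a little care is the ``same indices'' clause, which forbids choosing an independent subsequence for each $(i,l)$ and instead demands a common one; but this is resolved routinely by the finiteness of the index set $\{(i,l):i\in\mathcal{A}_l\}$ and the iterated extraction described above. The essential input is simply that the prices are uniformly bounded by $M$ and that the weights defining $w_{i,l}^{(n)}$ form a probability vector.
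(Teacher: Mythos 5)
Your argument is exactly the one the paper uses: the prices $p_{i,l}^{(k)}$ lie in $[0,M]$, so each $w_{i,l}^{(n)}$, being a convex combination of them, is bounded, and Bolzano--Weierstrass yields a convergent subsequence. Your additional care in extracting a single common subsequence over the finitely many pairs $(i,l)$ is a correct and welcome elaboration of the paper's one-line proof.
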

\begin{proof}
The proof  directly follows by employing  Bolzano-Weierstrass theorem, and the fact that $p_{i,l} \in [0, M]$ for any $i \in \mathcal{A}$ and $l \in \mathcal{L}$.
\end{proof}
\begin{lemma}
\label{lem1}
The following holds for any $t, t\in \mathbb{N}$, $i, i \in \mathcal{A}$ and $l, l \in \mathcal{L}$
\begin{eqnarray}
\kappa^{(t+1)}\Upsilon_i(\vect{p}_i^{(t+1)})-\sum_{l \in \mathcal{L}_i}|p_{i,l}^{(t+1)}-p_l|^2+\sum_{l \in \mathcal{L}_i}|\bar{p}_{l}^{(t)}-p_l|^2
-\sum_{l \in \mathcal{L}_i}|p_{i,l}^{(t+1)}-\bar{p}_l^{(t)}|^2 \geq \kappa^{(t+1)}\Upsilon_i(\vect{p}) 
\end{eqnarray}
for any $\vect{p} \in \Delta$\footnote{$\Delta, \Delta \subset \mathbb{R}^{|L|}$ is a bounded convex set that every bundle of the prices can be chosen from it}.  
\end{lemma}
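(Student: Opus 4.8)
The plan is to recognize the update \eqref{1} that produces $(\vect{x}_i^{(t+1)},\vect{p}_i^{(t+1)})$ as a \emph{proximal} maximization step, and to extract the extra quadratic term in the claim from \emph{strong concavity}, not merely from first-order optimality. Using the rearrangement established just above the lemma, the price component $\vect{p}_i^{(t+1)}$ maximizes, over the convex set $\Delta$, the function
\[
h_i(\vect{p}_i):=\Upsilon_i(\vect{p}_i)-\frac{1}{\kappa^{(t+1)}}\sum_{l\in\mathcal{L}_i}|p_{i,l}-\bar{p}_l^{(t)}|^2 ,
\]
where the center $\bar{p}_l^{(t)}$ and every $x$- and $\phi$-term are frozen at their time-$t$ values. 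First I would observe that $\Upsilon_i$ is \emph{affine} in $\vect{p}_i$: inspecting its definition, the only dependence on $p_{i,l}$ enters through the linear term $w_{-i,l}^{(t)}(p_{i,l}-p_{-i,l}^{(t)})(\cdots)/\gamma$, while $\phi_{i,l}^{(t)}$ does not depend on $p_{i,l}$ (as already used in Lemma \ref{lemma5}) and all $x_{\cdot}^{(t)}$ are constants here. Consequently $h_i$ is the sum of an affine function and $-\tfrac{1}{\kappa^{(t+1)}}\|\cdot-\bar{\vect{p}}^{(t)}\|^2$, hence it is $\mu$-strongly concave with modulus $\mu=2/\kappa^{(t+1)}$.

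The key tool I would invoke is the standard optimality inequality for a strongly concave maximand on a convex domain: if $h$ is $\mu$-strongly concave on the convex set $\Delta$ and $\vect{p}^\ast=\arg\max_{\vect{p}\in\Delta}h(\vect{p})$, then
\[
h(\vect{p}^\ast)\ \geq\ h(\vect{p})+\frac{\mu}{2}\,\|\vect{p}-\vect{p}^\ast\|^2\qquad\text{for all }\vect{p}\in\Delta .
\]
I would prove this in one line from the definition of strong concavity: evaluate $h$ at $\vect{p}^\ast+s(\vect{p}-\vect{p}^\ast)\in\Delta$, combine the strong-concavity lower bound with maximality $h(\vect{p}^\ast)\geq h(\vect{p}^\ast+s(\vect{p}-\vect{p}^\ast))$, divide by $s>0$, and let $s\downarrow 0$. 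With $\mu=2/\kappa^{(t+1)}$ this produces precisely the extra term $\tfrac{1}{\kappa^{(t+1)}}\sum_{l}|p_{i,l}^{(t+1)}-p_l|^2$ that ordinary first-order stationarity would miss.

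To finish, I would apply the displayed inequality with $h=h_i$, $\vect{p}^\ast=\vect{p}_i^{(t+1)}$ and an arbitrary $\vect{p}\in\Delta$, then multiply through by $\kappa^{(t+1)}>0$ to clear denominators. Writing $h_i$ out and transposing the two penalty blocks to the left-hand side, the centers combine via $|p_l-\bar{p}_l^{(t)}|^2=|\bar{p}_l^{(t)}-p_l|^2$, and the strong-concavity surplus contributes $-\sum_l|p_{i,l}^{(t+1)}-p_l|^2$; collecting terms yields exactly the claimed inequality. The genuine content of the argument — and the step most easily gotten wrong — is not the algebra but the two structural facts that feed the strong-concavity bound: that $\Upsilon_i$ carries \emph{no} quadratic dependence on $\vect{p}_i$ (so the only curvature comes from the prox penalty) and that the prox modulus is exactly $2/\kappa^{(t+1)}$. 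I would therefore spend the most care verifying affineness of $\Upsilon_i$ and confirming that the update producing $\vect{p}_i^{(t+1)}$ indeed uses the fixed center $\bar{p}_l^{(t)}$ and coefficient $1/\kappa^{(t+1)}$, so that the $\Delta$-constrained strong-concavity inequality is legitimately available.
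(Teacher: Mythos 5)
Your proof is correct, and it reaches the stated inequality by a route that is more self-contained than the paper's. The paper disposes of this lemma in one sentence: it invokes (an extension of) Theorem 1.6 of Lions, the variational-inequality characterization $J_1'(u)\cdot(v-u)+J_2(v)-J_2(u)\geq 0$ of the minimizer of $J_1+J_2$ with $J_1=|\cdot|^2$ the differentiable quadratic part, and then relies implicitly on the three-point identity $2\langle u-c,\,v-u\rangle=|v-c|^2-|u-c|^2-|v-u|^2$ to turn that characterization into the displayed inequality with its three quadratic terms. You instead fold everything into a single strong-concavity statement: since $\Upsilon_i$ is affine in $\vect{p}_i$ (a fact the paper only records later, when it needs $\Upsilon_i(\vect{w}_i^{(n)})$ to equal the average of $\Upsilon_i(\vect{p}_i^{(t+1)})$), the proximal objective is $(2/\kappa^{(t+1)})$-strongly concave, and the constrained-maximizer inequality $h(\vect{p}^\ast)\geq h(\vect{p})+\tfrac{\mu}{2}\|\vect{p}-\vect{p}^\ast\|^2$ delivers the surplus term $-\sum_l|p_{i,l}^{(t+1)}-p_l|^2$ directly. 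The two arguments are mathematically equivalent (the strong-concavity surplus and the quadratic three-point identity encode the same curvature information), but yours has two advantages: it is elementary and does not outsource the key step to an external theorem, and it isolates exactly the structural hypotheses on which the bound depends, namely that $\Upsilon_i$ contributes no curvature in $\vect{p}_i$ and that the prox center $\bar{p}_l^{(t)}$ and weight $1/\kappa^{(t+1)}$ are frozen during the update. The only caveat, which affects the paper equally, is that the update \eqref{1} is stated as a joint argmax over $(\vect{x}_i,\vect{p}_i)$, and both proofs tacitly use the rearrangement with $\vect{x}$ held at its time-$t$ value so that the price step is a clean proximal maximization; you flag this explicitly, which is appropriate.
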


\begin{proof}
The proof of this Lemma comes directly from the extension of Theorem 1.6\footnote{Theorem 1.6 \cite{lion}: Consider the function $J(v)=J_1(v)+J_2(v)$ where we assume that the functions $J_i(v), i=1,2,$ are continuous, convex, and lower semi-continuous in the weak topology. Further let $$J(v)\rightarrow +\infty  \  \mbox{as} \ ||v||\rightarrow +\infty.$$ We assume that the function $v \rightarrow J_1(v)$ is differentiable, but $J_2$ is not necessarily differentiable. Finally assume that $J$ is strictly convex. Then the unique element of $u$ such that $J(u)=\inf_{v} J(v)$ is characterized by$$J_1'(u)\cdot(v-u)+J_2(v)-J_2(u)\geq 0 \quad \forall \ v.$$}, p. 12 of \cite{lion}, by taking $|\cdot|^2$ as function $J_1(\cdot)$ and $U_i(\cdot)$ as function $J_2(\cdot)$. 
\end{proof}
\begin{lemma}
\label{lem2}
We have the following for any $i, i \in \mathcal{A}$
\begin{eqnarray}
\frac{\sum_{l \in \mathcal{L}_i}\sum_{k=0}^{n-1}|p_{i,l}^{(k+1)}-\bar{p}_l^{(k)}|^2}{\kappa^{(n)}} < \Lambda_1, \quad \quad \forall \ n.
\end{eqnarray}
where $\Lambda_1$ is a constant.
\end{lemma}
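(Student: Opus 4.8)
The plan is to turn the per-step optimality condition of Lemma~\ref{lem1} into a bound on the partial sums by a telescoping argument, and then to divide by $\kappa^{(n)}$. Throughout I would use the standing fact (already invoked for Lemma~\ref{lem0}) that every price lies in $[0,M]$, so that $\bar p_l^{(k)}\in[0,M]$ and each increment satisfies $|p_{i,l}^{(k+1)}-\bar p_l^{(k)}|^2\le M^2$. Note that the crude bound $\sum_{k=0}^{n-1}|p_{i,l}^{(k+1)}-\bar p_l^{(k)}|^2\le nM^2$ is useless here, since $n/\kappa^{(n)}\to\infty$ whenever $\kappa^{(n)}$ grows sublinearly; hence the optimality structure of the iterates must genuinely be exploited.

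First I would fix a comparison vector $\vect p\in\Delta$ and rewrite Lemma~\ref{lem1}, applied at each time $t=k$, as
\begin{eqnarray}
\sum_{l\in\mathcal L_i}|p_{i,l}^{(k+1)}-\bar p_l^{(k)}|^2 \le \kappa^{(k+1)}\big[\Upsilon_i(\vect p_i^{(k+1)})-\Upsilon_i(\vect p)\big]+\sum_{l\in\mathcal L_i}|\bar p_l^{(k)}-p_l|^2-\sum_{l\in\mathcal L_i}|p_{i,l}^{(k+1)}-p_l|^2. \nonumber
\end{eqnarray}
The decisive step is to sum this not only over $k=0,\dots,n-1$ but, good by good (which is legitimate because the squared-distance terms are additive over goods), also over the agents $i\in\mathcal A_l$. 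Since $\bar p_l^{(k)}$ is exactly the average of the $p_{i,l}^{(k)}$ over $i\in\mathcal A_l$, Jensen's inequality gives $|\mathcal A_l|\,|\bar p_l^{(k+1)}-p_l|^2\le\sum_{i\in\mathcal A_l}|p_{i,l}^{(k+1)}-p_l|^2$, so the aggregated squared-distance terms collapse to the telescoping expression $|\mathcal A_l|\big(|\bar p_l^{(k)}-p_l|^2-|\bar p_l^{(k+1)}-p_l|^2\big)$. Summing over $k$ leaves only the boundary terms, bounded by $|\mathcal A_l|M^2$, i.e. $O(1)$ uniformly in $n$.

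It then remains to show that the accumulated objective-increment terms $\sum_{k=0}^{n-1}\kappa^{(k+1)}\sum_{i\in\mathcal A_l}[\Upsilon_i(\vect p_i^{(k+1)})-\Upsilon_i(\vect p)]$ grow no faster than $O(\kappa^{(n)})$. I would exploit that $\Upsilon_i$ is affine in $\vect p_i$ with gradient $w_{-i,l}^{(k)}(\sum_{j\in\mathcal A_l}x_{j,l}^{(k)}-c_l)/\gamma$, whose coefficients are uniformly bounded (prices in $[0,M]$) and, because $\gamma$ is chosen large, of order $O(1/\gamma)$; hence each increment is small, and after summation over $i\in\mathcal A_l$ the leading contributions partially cancel, since the iterates enter only through $\sum_{i\in\mathcal A_l}(p_{i,l}^{(k+1)}-p_l)=|\mathcal A_l|(\bar p_l^{(k+1)}-p_l)$. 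Controlling these terms—either by choosing $\vect p$ to make the brackets favourable, or by a summation-by-parts against $\kappa^{(k+1)}-\kappa^{(k)}=\theta^{(k+1)}\to0$—is the technical heart of the argument. Once the aggregate $T_n^{(l)}:=\sum_{i\in\mathcal A_l}\sum_{k=0}^{n-1}|p_{i,l}^{(k+1)}-\bar p_l^{(k)}|^2$ is shown to satisfy $T_n^{(l)}\le\Lambda^{(l)}\kappa^{(n)}$, dividing by $\kappa^{(n)}$ yields a uniform bound; and since every summand is nonnegative, the single-agent quantity of the lemma is dominated by $\sum_{l\in\mathcal L_i}T_n^{(l)}/\kappa^{(n)}\le\sum_{l\in\mathcal L_i}\Lambda^{(l)}=:\Lambda_1$.

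The main obstacle, as indicated, is the tension in the choice of the comparison point $\vect p$: a point fixed in $k$ is what makes the squared-distance terms telescope via Jensen, yet a fixed $\vect p$ forces one to bound the $\Upsilon_i$-increments by their nonvanishing worst case, whose crude accumulation $\sum_k\kappa^{(k+1)}$ already exceeds $\kappa^{(n)}$. Reconciling these requires using the genuinely small, $O(1/\gamma)$, variation of $\Upsilon_i$ in $\vect p_i$ together with the cancellation obtained from averaging over $i\in\mathcal A_l$, rather than term-by-term estimates; this is the step I expect to demand the most care, and also the place where the coupling of $\Upsilon_i$ across the different goods of agent $i$ must be absorbed into the residual that is shown to be $O(\kappa^{(n)})$.
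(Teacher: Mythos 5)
Your overall skeleton coincides with the paper's: apply Lemma \ref{lem1} at each time $t$, sum over the agents so that Jensen's inequality $|\mathcal{A}_l|\,|\bar p_l^{(t+1)}-p_l|^2\le\sum_{i\in\mathcal{A}_l}|p_{i,l}^{(t+1)}-p_l|^2$ turns the squared-distance terms into a telescoping sum with $O(1)$ boundary terms, then sum over $t$, divide by $\kappa^{(n)}$, and finally pass from the aggregate to the single-agent quantity by nonnegativity of the summands. The gap is that you never close the step you yourself single out as the ``technical heart'': bounding the accumulated objective increments $\sum_{t=0}^{n-1}\sum_{i}\big[\Upsilon_i(\vect p_i^{(t+1)})-\Upsilon_i(\vect p)\big]$ (with their per-step weights) by $O(\kappa^{(n)})$. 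You offer two heuristics --- that the gradient of $\Upsilon_i$ in $\vect p_i$ is of order $1/\gamma$, and that averaging over $i\in\mathcal{A}_l$ produces cancellation --- but neither is carried out, and neither is the mechanism the paper actually uses. As written, the argument stops exactly where the proof has to do its work.

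The paper's resolution is an exact identity rather than an estimate: because $\Upsilon_i$ is affine in $\vect p_i$ and $\vect w_i^{(n)}$ is by definition the $\theta$-weighted average of the iterates $\vect p_i^{(k)}$ with weights $\theta^{(k)}$ summing to $\kappa^{(n)}$, one has $\sum_{t=0}^{n-1}\theta^{(t+1)}\Upsilon_i(\vect p_i^{(t+1)})=\kappa^{(n)}\Upsilon_i(\vect w_i^{(n)})$, so the accumulated objective terms collapse to $\kappa^{(n)}\big[\sum_i\Upsilon_i(\vect w_i^{(n)})-\sum_i\Upsilon_i(\vect p)\big]$, which is $O(\kappa^{(n)})$ simply because $\Upsilon_i$ is bounded on the bounded price set; no smallness in $\gamma$ and no inter-agent cancellation is needed. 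The ``tension'' you describe --- that a fixed comparison point forces a worst-case accumulation exceeding $\kappa^{(n)}$ --- dissolves once the per-step weight is read as $\theta^{(t+1)}$ rather than $\kappa^{(t+1)}$ (which is what the paper's affine-average identity forces): with weights summing to $\kappa^{(n)}$, even the crude uniform bound $|\Upsilon_i(\vect p_i^{(t+1)})-\Upsilon_i(\vect p)|\le C$ suffices. Supplying either that identity or that observation is what your proposal is missing.
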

\begin{proof}
From Lemma \ref{lem1} the following holds for any $i, i \in \mathcal{A}$,
\begin{eqnarray}
\label{11}
\kappa^{(t+1)}\Upsilon_i(\vect{p}_i^{(t+1)})-\sum_{l \in \mathcal{L}_i}|p_{i,l}^{(t+1)}-p_l|^2+\sum_{l \in \mathcal{L}_i}|\bar{p}_{l}^{(t)}-p_l|^2\nonumber \\
-\sum_{l \in \mathcal{L}_i}|p_{i,l}^{(t+1)}-\bar{p}_l^{(t)}|^2 \geq \kappa^{(t+1)}\Upsilon_i(\vect{p})
\end{eqnarray}
Adding \eqref{11} over all $i$ implies
\begin{eqnarray}
\label{12}
&&\sum_{i\in \mathcal{A}}\kappa^{(t+1)}\Upsilon_i(\vect{p}_i^{(t+1)})-\sum_{i\in \mathcal{A}}\sum_{l \in \mathcal{L}_i}|p_{i,l}^{(t+1)}-p_l|^2\nonumber\\
&&+\sum_{i\in \mathcal{A}}\sum_{l \in \mathcal{L}_i}|\bar{p}_{l}^{(t)}-p_l|^2-\sum_{i\in \mathcal{A}}\sum_{l \in \mathcal{L}_i}|p_{i,l}^{(t+1)}-\bar{p}_l^{(t)}|^2 \nonumber \\
&&\quad \quad\geq \sum_{i\in \mathcal{A}}\kappa^{(t+1)}\Upsilon_i(\vect{p}).
\end{eqnarray}
By a simple algebra we can obtain
\begin{eqnarray}
\label{13}
\sum_{i\in \mathcal{A}}\sum_{l \in \mathcal{L}_i}|\bar{p}_{l}^{(t)}-p_l|^2=\sum_{l \in \mathcal{L}}|\mathcal{A}_l||\bar{p}_{l}^{(t)}-p_l|^2.
\end{eqnarray}
Furthermore, due to the convexity of $|\cdot|^2$, observes that 
\begin{eqnarray}
\label{14}
|\bar{p}_{l}^{(t+1)}-p_l|^2 \leq \frac{\sum_{i \in \mathcal{A}_l}|\bar{p}_{i,l}^{(t+1)}-p_l|^2}{|\mathcal{A}_l|},
\end{eqnarray}
thus,
\begin{eqnarray}
\label{15}
\sum_{l \in \mathcal{L}}|\mathcal{A}_l||\bar{p}_{l}^{(t+1)}-p_l|^2 \leq \sum_{l \in \mathcal{L}}\sum_{i \in \mathcal{A}_l}|\bar{p}_{i,l}^{(t+1)}-p_l|^2.
\end{eqnarray}
Employing \eqref{13} and \eqref{15} in \eqref{12} imply that
\begin{eqnarray}
\label{16}
&&\sum_{i\in \mathcal{A}}\kappa^{(t+1)}\Upsilon_i(\vect{p}_i^{(t+1)})-\sum_{l \in \mathcal{L}}|\mathcal{A}_l||\bar{p}_{l}^{(t+1)}-p_l|^2\nonumber \\
&&+\sum_{l \in \mathcal{L}}|\mathcal{A}_l||\bar{p}_{l}^{(t)}-p_l|^2-\sum_{i\in \mathcal{A}}\sum_{l \in \mathcal{L}_i}|p_{i,l}^{(t+1)}-\bar{p}_l^{(t)}|^2 \nonumber \\
&& \quad \quad \geq \sum_{i\in \mathcal{A}}\kappa^{(t+1)}\Upsilon_i(\vect{p}).
\end{eqnarray}
Adding \eqref{16} over  $t, t=0,1,\cdots, n-1$ results in
\begin{eqnarray} 
\label{17}
&&\sum_{t=0}^{n-1}\sum_{i\in \mathcal{A}}\kappa^{(t+1)}\Upsilon_i(\vect{p}_i^{(t+1)})-\sum_{t=0}^{n-1}\sum_{i\in \mathcal{A}}\sum_{l \in \mathcal{L}_i}|p_{i,l}^{(t+1)}-\bar{p}_l^{(t)}|^2 \nonumber\\
&&-\sum_{l \in \mathcal{L}}|\mathcal{A}_l|\sum_{t=0}^{n-1} \left\{|\bar{p}_{l}^{(t+1)}-p_l|^2 -|\bar{p}_{l}^{(t)}-p_l|^2\right\}\nonumber\\
&& \quad \quad  \geq \sum_{t=0}^{n-1}\sum_{i\in \mathcal{A}}\kappa^{(t+1)}\Upsilon_i(\vect{p}).
\end{eqnarray}
>From \eqref{17} we get 
\begin{eqnarray} 
\label{18}
&&\sum_{t=0}^{n-1}\sum_{i\in \mathcal{A}}\kappa^{(t+1)}\Upsilon_i(\vect{p}_i^{(t+1)})-\sum_{l \in \mathcal{L}}|\mathcal{A}_l||\bar{p}_{l}^{(n)}-p_l|^2\nonumber\\
&& -\sum_{t=0}^{n-1}\sum_{i\in \mathcal{A}}\sum_{l \in \mathcal{L}_i}|p_{i,l}^{(t+1)}-\bar{p}_l^{(t)}|^2 \nonumber \\
&& \quad \quad  \geq \kappa^{(n)}\sum_{i\in \mathcal{A}}\Upsilon_i(\vect{p})-\sum_{l \in \mathcal{L}}|\mathcal{A}_l||\bar{p}_{l}^{(0)}-p_l|^2.
\end{eqnarray}
Since for any $i, i\in \mathcal{A},$ $\Upsilon_i(\vect{p}_i)$ is affine in $\vect{p}_i$, thus 
\begin{eqnarray} 
\label{19}
\frac{\sum_{t=0}^{n-1}\kappa^{(n+1)}\Upsilon_i(\vect{p}_i^{(t+1)})}{\kappa^{(n)}}=\Upsilon_i(\vect{w}_i^{(n)}),
\end{eqnarray}
subsequently,
\begin{eqnarray} 
\label{20}
\sum_{t=0}^{n-1}\sum_{i \in \mathcal{A}}\kappa^{(n+1)}\Upsilon_i(\vect{p}_i^{(t+1)})=\kappa^{(n)}\sum_{i \in \mathcal{A}}\Upsilon_i(\vect{w}_i^{(n)}),
\end{eqnarray}
Substituting \eqref{20} in \eqref{18} and dividing by $\kappa^{(n)}$ result in
\begin{eqnarray} 
\label{21}
\sum_{i \in \mathcal{A}}\Upsilon_i(\vect{w}_i^{(n)})-\frac{\sum_{l \in \mathcal{L}}|\mathcal{A}_l||\bar{p}_{l}^{(n)}-p_l|^2}{\kappa^{(n)}} -\frac{\sum_{t=0}^{n-1}\sum_{i\in \mathcal{A}}\sum_{l \in \mathcal{L}_i}|p_{i,l}^{(t+1)}-\bar{p}_l^{(t)}|^2}{\kappa^{(n)}} \nonumber \\
 \geq \sum_{i\in \mathcal{A}}\Upsilon_i(\vect{p})-\frac{\sum_{l \in \mathcal{L}}|\mathcal{A}_l||\bar{p}_{l}^{(0)}-p_l|^2}{\kappa^{(n)}}.
\end{eqnarray}
Since $\Delta$ is a bounded set and $\lim_{n\rightarrow \infty} \kappa^{(n)}=\infty$, thus 
\begin{eqnarray} 
\label{22}
\lim_{n\rightarrow \infty}\frac{\sum_{l \in \mathcal{L}}|\mathcal{A}_l||\bar{p}_{l}^{(n)}-p_l|^2}{\kappa^{(n)}}=0,\\
\label{23}
\lim_{n\rightarrow \infty}\frac{\sum_{l \in \mathcal{L}}|\mathcal{A}_l||\bar{p}_{l}^{(0)}-p_l|^2}{\kappa^{(n)}}=0.
\end{eqnarray}
Similarly,
\begin{eqnarray}
\label{24}
\lim_{n\rightarrow \infty}\frac{\sum_{t=0}^{n-1}\sum_{i\in \mathcal{A}}\sum_{l \in \mathcal{L}_i}|p_{i,l}^{(t+1)}-\bar{p}_l^{(t)}|^2}{\kappa^{(n)}}=0.
\end{eqnarray}
Consequently, using \eqref{22},\eqref{23} and \eqref{24} in \eqref{21} and the fact that $\Upsilon_i(\vect{p}_i)$ for any $i, i\in \mathcal{A},$ is bounded, the proof of the lemma is complete, i.e., there exists $\Lambda_1$ such that
\begin{eqnarray}
\label{25}
\frac{\sum_{l \in \mathcal{L}_i}\sum_{k=0}^{n-1}|p_{i,l}^{(k+1)}-\bar{p}_l^{(k)}|^2}{\kappa^{(n)}} < \Lambda_1, \quad \quad \forall \ n.
\end{eqnarray}
\end{proof}

\begin{lemma}
\label{lem3}
For any $i, i\in \mathcal{A}$, we have
\begin{eqnarray}
\label{29}
\lim_{n'\rightarrow \infty} |w_{i,l}^{(n')}-w_{l}^{(n')}|^2=0, \quad \forall \ l \in \mathcal{L}_i.
\end{eqnarray}
\end{lemma}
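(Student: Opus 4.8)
The plan is to recognize the discrepancy $w_{i,l}^{(n)}-w_{l}^{(n)}$ as a Ces\`aro-type weighted average of the one-step price gaps $p_{i,l}^{(k)}-\bar{p}_l^{(k-1)}$, and then to invoke the summability estimate already produced inside the proof of Lemma \ref{lem2}. First I would unfold the definitions of $w_{i,l}^{(n)}$ and $w_{l}^{(n)}$ and reindex the latter (substituting $k\mapsto k+1$ in its defining sum) so that both become sums over $k=1,\dots,n$ against the common weights $\theta^{(k)}$. Since $\sum_{k=1}^{n}\theta^{(k)}=\kappa^{(n)}$, this yields the identity
\[
w_{i,l}^{(n)} - w_l^{(n)} = \sum_{k=1}^{n} \frac{\theta^{(k)}}{\kappa^{(n)}} \left( p_{i,l}^{(k)} - \bar{p}_l^{(k-1)} \right),
\]
in which the coefficients $\theta^{(k)}/\kappa^{(n)}$ are nonnegative and sum to one, i.e.\ they form a probability vector.

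Next I would pass to squared magnitudes. Because $t\mapsto t^{2}$ is convex and the coefficients form a probability vector, Jensen's inequality gives
\[
\left| w_{i,l}^{(n)} - w_l^{(n)} \right|^2 \leq \sum_{k=1}^{n} \frac{\theta^{(k)}}{\kappa^{(n)}} \left| p_{i,l}^{(k)} - \bar{p}_l^{(k-1)} \right|^2 \leq \theta^{(1)} \cdot \frac{\sum_{k=1}^{n} \left| p_{i,l}^{(k)} - \bar{p}_l^{(k-1)} \right|^2}{\kappa^{(n)}},
\]
where the last inequality uses the hypothesis that $\{\theta^{(n)}\}$ is decreasing, so $\theta^{(k)}\leq\theta^{(1)}$ for every $k$. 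This reduces the claim to controlling the \emph{unweighted} Ces\`aro average of the squared price gaps.

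Finally I would quote equation \eqref{24}, established in the proof of Lemma \ref{lem2}, which shows that the full double-indexed average $\frac{1}{\kappa^{(n)}}\sum_{t=0}^{n-1}\sum_{i\in\mathcal{A}}\sum_{l\in\mathcal{L}_i}|p_{i,l}^{(t+1)}-\bar{p}_l^{(t)}|^2$ tends to zero. After the reindexing $t=k-1$, its summand is exactly $|p_{i,l}^{(k)}-\bar{p}_l^{(k-1)}|^2$, and since every term is nonnegative, the single-$(i,l)$ partial sum $\frac{1}{\kappa^{(n)}}\sum_{k=1}^{n}|p_{i,l}^{(k)}-\bar{p}_l^{(k-1)}|^2$ is dominated by the full sum and hence also vanishes. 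Combined with the displayed bound, this gives $|w_{i,l}^{(n)}-w_l^{(n)}|^2\to 0$ along the \emph{entire} sequence, and therefore along the particular convergent subsequence $n'$ supplied by Lemma \ref{lem0}, which is precisely what the statement asserts.

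Rather than a genuine obstacle, the step requiring the most care is the bookkeeping of the one-step index shift: the definition of $w_l^{(n)}$ weights $\bar{p}_l^{(k-1)}$ while $w_{i,l}^{(n)}$ weights $p_{i,l}^{(k)}$, and one must verify that this mismatch lines up exactly with the shifted summand $|p_{i,l}^{(t+1)}-\bar{p}_l^{(t)}|^2$ appearing in \eqref{24}, so that the earlier estimate applies verbatim. The only other point to check is that replacing the $\theta$-weighted average of squares by the unweighted one through $\theta^{(k)}\leq\theta^{(1)}$ is legitimate, which is immediate from the monotonicity of $\{\theta^{(n)}\}$; no further regularity of the utilities or of the allocation dynamics is needed here.
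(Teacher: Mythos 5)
Your opening move coincides with the paper's: by Jensen's inequality applied to the probability weights $\theta^{(k)}/\kappa^{(n)}$, the squared discrepancy $|w_{i,l}^{(n)}-w_{l}^{(n)}|^2$ is dominated by the $\theta$-weighted average of the gaps $|p_{i,l}^{(k)}-\bar{p}_l^{(k-1)}|^2$ (this is \eqref{30}, modulo the paper's typographical slips writing $\kappa^{(t+1)}$ for $\theta^{(t+1)}$ and $p_l^{(t)}$ for $\bar{p}_l^{(t)}$); your index bookkeeping there is careful and correct. The gap is in what you do next. By replacing every weight $\theta^{(k)}$ with $\theta^{(1)}$ you discard the decay of the weights entirely and reduce the claim to the assertion that the \emph{unweighted} average $\frac{1}{\kappa^{(n)}}\sum_{k=1}^{n}|p_{i,l}^{(k)}-\bar{p}_l^{(k-1)}|^2$ tends to zero. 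That assertion is \eqref{24}, but \eqref{24} is not what Lemma~\ref{lem2} states --- the lemma only asserts boundedness by the constant $\Lambda_1$ --- and its appearance inside that proof is supported only by the word ``similarly,'' which does not transfer: unlike \eqref{22} and \eqref{23}, the numerator in \eqref{24} contains $n$ terms, so for $\theta^{(n)}=1/n$ it can grow linearly in $n$ while $\kappa^{(n)}$ grows only like $\log n$. Using only what Lemma~\ref{lem2} actually provides, your chain of inequalities terminates at $|w_{i,l}^{(n)}-w_{l}^{(n)}|^2\le\theta^{(1)}\Lambda_1$, a constant, which proves nothing.

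The idea you are missing --- and the reason the paper's own proof of this lemma never invokes \eqref{24} --- is a two-block split of the weighted sum at an index $n_\epsilon$. For the head $k\le n_\epsilon$ the numerator is a fixed finite sum, bounded by some $\Lambda_2$ because all prices lie in the bounded set $\Delta$, so dividing by $\kappa^{(n')}\to\infty$ annihilates it (this is \eqref{33}--\eqref{34}); for the tail $k> n_\epsilon$ one keeps the weights and bounds $\theta^{(k)}\le\theta^{(n_\epsilon)}$, so that block is at most $\theta^{(n_\epsilon)}\Lambda_1$ by the boundedness conclusion of Lemma~\ref{lem2}, which is made smaller than $\epsilon/2$ because $\theta^{(n)}\downarrow 0$ (this is \eqref{35}--\eqref{36}, where again $\kappa^{(n_\epsilon)}$ should read $\theta^{(n_\epsilon)}$). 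This argument uses only facts that are genuinely available: the boundedness $\Lambda_1$ and the monotone decay of $\theta$. To repair your proof, replace the uniform bound $\theta^{(k)}\le\theta^{(1)}$ by this split; as written, the argument rests its full weight on the unsubstantiated display \eqref{24}.
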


\begin{proof}
Since $|\cdot|^2$ is convex, thus
\begin{eqnarray}
\label{30}
|w_{i,l}^{(n')}-w_{l}^{(n')}|^2 \leq \frac{\sum_{t=0}^{n'-1}\kappa^{(t+1)}|p_{i,l}^{(t+1)}-p_l^{(t)}|^2}{\kappa^{(n)}}.
\end{eqnarray}
Now, we must show that 
\begin{eqnarray}
\label{31}
\forall \ \epsilon>0, \exists n_{\epsilon}, s.t., \forall \ n'>n_{\epsilon}, \sum_{l \in \mathcal{L}_i}|w_{i,l}^{(n')}-w_{l}^{(n')}|^2<\epsilon. 
\end{eqnarray}
>From \eqref{30}, for any $n'>n_{\epsilon}$ we have,
\begin{eqnarray*}
\label{32}
\sum_{l \in \mathcal{L}_i}|w_{i,l}^{(n')}-w_{l}^{(n')}|^2 &\leq & \frac{\sum_{l \in \mathcal{L}_i}\sum_{t=0}^{n'-1}\kappa^{(t+1)}|p_{i,l}^{(t+1)}-p_l^{(t)}|^2}{\kappa^{(n')}}\nonumber \\ 
&=& \frac{\sum_{l \in \mathcal{L}_i}\sum_{t=0}^{n_{\epsilon}-1}\kappa^{(t+1)}|p_{i,l}^{(t+1)}-p_l^{(t)}|^2}{\kappa^{(n')}}
 +\frac{\sum_{l \in \mathcal{L}_i}\sum_{t=n_{\epsilon}}^{n'-1}\kappa^{(t+1)}|p_{i,l}^{(t+1)}-p_l^{(t)}|^2}{\kappa^{(n')}}\nonumber\\
&\leq& \frac{\sum_{l \in \mathcal{L}_i}\kappa^{(1)}\sum_{t=0}^{n_{\epsilon}-1}|p_{i,l}^{(t+1)}-p_l^{(t)}|^2}{\kappa^{(n')}} 
 +\frac{\sum_{l \in \mathcal{L}_i}\kappa^{(n_{\epsilon})}\sum_{t=n_{\epsilon}}^{n'-1}|p_{i,l}^{(t+1)}-p_l^{(t)}|^2}{\kappa^{(n')}}\nonumber \\
&=&\frac{\kappa^{(1)}\sum_{l \in \mathcal{L}_i}\sum_{t=0}^{n_{\epsilon}-1}|p_{i,l}^{(t+1)}-p_l^{(t)}|^2}{\kappa^{(n')}}
 +\frac{\kappa^{(n_{\epsilon})}\sum_{l \in \mathcal{L}_i}\sum_{t=n_{\epsilon}}^{n'-1}|p_{i,l}^{(t+1)}-p_l^{(t)}|^2}{\kappa^{(n')}}.
\end{eqnarray*}
Since $\Delta$ is bounded, thus there exists $\Lambda_2$ such that
\begin{eqnarray}
\label{33}
\sum_{l \in \mathcal{L}_i}\sum_{t=0}^{n_{\epsilon}-1}|p_{i,l}^{(t+1)}-p_l^{(t)}|^2<\Lambda_2.
\end{eqnarray}
Furthermore, $\kappa^{(n')}$ tends to infinity as $n'$ goes to infinity, thus we can choose $n_{\epsilon}^1$ large enough so that
\begin{eqnarray}
\label{34}
\kappa^{(n')}>\frac{2\kappa^{(1)}\Lambda_2}{\epsilon}, \quad \forall \ n'>n_{\epsilon}^1.
\end{eqnarray}
Moreover, using Lemma \ref{lem2} implies that
\begin{eqnarray}
\label{35}
\frac{\sum_{l \in \mathcal{L}_i}\sum_{t=n_{\epsilon}}^{n'-1}|p_{i,l}^{(t+1)}-p_l^{(t)}|^2}{\kappa^{(n')}}<\Lambda_1.
\end{eqnarray} 
Since $\kappa^{(n')}$ tends to zero as $n'$ goes to infinity, thus 
\begin{eqnarray}
\label{36}
\exists \ n_{\epsilon}^2, \quad \  s.t.  \quad \ \kappa^{(n_{\epsilon})}<\frac{\epsilon}{2\Lambda_1}.
\end{eqnarray}
Choosing $n_{\epsilon}=\max \{n_{\epsilon}^1,n_{\epsilon}^2\}$ together with \eqref{34} and \eqref{36} implies that 
\begin{eqnarray}
\label{37}
\sum_{l \in \mathcal{L}_i}|w_{i,l}^{(n')}-w_{l}^{(n')}|^2 <\epsilon, \quad \forall \ n'>n_{\epsilon}, 
\end{eqnarray}
Therefore, 
\begin{eqnarray}
\label{38}
\lim_{n'\rightarrow \infty}\sum_{l \in \mathcal{L}_i}|w_{i,l}^{(n')}-w_{l}^{(n')}|^2
 \quad \quad =\sum_{l\in \mathcal{L}_i}\left\{\lim_{n'\rightarrow \infty}|w_{i,l}^{(n')}-w_{l}^{(n')}|^2\right\}=0.  
\end{eqnarray}
Consequently, $\lim_{n'\rightarrow \infty}|w_{i,l}^{(n')}-w_{l}^{(n')}|^2=0$, and the proof is complete. 
\end{proof}

\begin{corollary}
\label{cor1}
Let for some $i, i\in \mathcal{A}$, $\lim_{n'\rightarrow \infty}w_{i,l}^{(n')}=w_{i,l}^*$, then 
\begin{enumerate}
	\item $\lim_{n'\rightarrow \infty}|w_{j,l}^{(n')}-w_{l}^{(n')}|^2=0$, $\quad \forall \ j \in \mathcal{A}_l, \ l\in L.$
	\item $\lim_{n'\rightarrow \infty}w_{j,l}^{(n')}=w_{i,l}^*=:w_l^*.$ 
\end{enumerate}
\end{corollary}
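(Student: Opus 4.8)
The plan is to deduce the entire corollary from Lemma \ref{lem3}, which already establishes that $|w_{i,l}^{(n')}-w_l^{(n')}|^2\to 0$ for \emph{every} agent and every good. The conceptual point is that the common averaged price $w_l^{(n')}$ acts as a single ``hub'' to which all the individual time-averaged prices $w_{j,l}^{(n')}$, $j\in\mathcal{A}_l$, are tethered; consequently, once one of them is known to converge, all of them must converge to the same limit. Throughout, I note that the $\epsilon$--argument in the proof of Lemma \ref{lem3} (culminating in its equation (31)) holds for \emph{all} sufficiently large indices, so the convergence $|w_{j,l}^{(n')}-w_l^{(n')}|^2\to 0$ is valid along the same index set $n'$ for every agent $j$ simultaneously.

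First I would dispatch part (1). Since Lemma \ref{lem3} is stated for an arbitrary agent, I simply re-read it with the index $j$ in place of $i$, which yields at once $\lim_{n'\to\infty}|w_{j,l}^{(n')}-w_l^{(n')}|^2=0$ for every $j\in\mathcal{A}_l$ and every $l\in\mathcal{L}$. This is exactly claim (1), so no new work is required.

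Next I would prove part (2) by a two-step triangle-inequality argument routed through $w_l^{(n')}$. Starting from the hypothesis $w_{i,l}^{(n')}\to w_{i,l}^*$ and combining it with Lemma \ref{lem3} for agent $i$, namely $|w_{i,l}^{(n')}-w_l^{(n')}|\to 0$, I first show that the hub sequence itself converges:
\begin{eqnarray*}
|w_l^{(n')}-w_{i,l}^*|\;\leq\;|w_l^{(n')}-w_{i,l}^{(n')}|+|w_{i,l}^{(n')}-w_{i,l}^*|\;\longrightarrow\;0,
\end{eqnarray*}
so $w_l^{(n')}\to w_{i,l}^*$. Then, for an arbitrary $j\in\mathcal{A}_l$, I invoke part (1), i.e. $|w_{j,l}^{(n')}-w_l^{(n')}|\to 0$, and apply the triangle inequality once more:
\begin{eqnarray*}
|w_{j,l}^{(n')}-w_{i,l}^*|\;\leq\;|w_{j,l}^{(n')}-w_l^{(n')}|+|w_l^{(n')}-w_{i,l}^*|\;\longrightarrow\;0.
\end{eqnarray*}
Hence $w_{j,l}^{(n')}\to w_{i,l}^*$ for every $j\in\mathcal{A}_l$, and defining $w_l^*:=w_{i,l}^*$ completes the proof.

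There is no genuine analytic obstacle here, since all of the quantitative content is carried by Lemma \ref{lem3}; the argument is purely a matter of combining limits via the triangle inequality. The only place that warrants care is the subsequence bookkeeping: the three convergences for $w_{i,l}^{(n')}$, $w_l^{(n')}$, and $w_{j,l}^{(n')}$ must all be read along the \emph{same} index sequence $n'$. This is legitimate precisely because Lemma \ref{lem3} holds uniformly over agents (for every large index, not merely along some agent-dependent subsequence), so passing to the subsequence on which $w_{i,l}^{(n')}$ converges leaves the differences $w_{j,l}^{(n')}-w_l^{(n')}$ tending to zero for all $j$ undisturbed.
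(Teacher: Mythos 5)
Your argument is correct and follows essentially the same route as the paper, which simply declares the corollary a direct consequence of Lemmas \ref{lem0}, \ref{lem2} and \ref{lem3}; you have merely made explicit the triangle-inequality bookkeeping through the hub sequence $w_l^{(n')}$ that the paper leaves implicit. No further comment is needed.
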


\begin{proof}
The proof is a direct result of Lemmas \ref{lem0}, \ref{lem2} and \ref{lem3}. 
\end{proof}
Now, using corollary \ref{cor1} in \eqref{21} implies that 
\begin{eqnarray}
\label{38}
\sum_{i\in \mathcal{A}}\Upsilon_i(\vect{w}_i^*)\geq \sum_{i\in \mathcal{A}}\Upsilon_i(\vect{p}), \quad \forall \ \vect{p}\in \Delta.
\end{eqnarray}
Now, since by the assumptions $U_i(\cdot)$ is strictly concave for any $i=1,2,\cdots,m$,  the Karush-Kuhn-Tucker (KKT) conditions are necessary and sufficient to guarantee the optimality of the allocation $\vect{x}^*=(\vect{x}_1^*,\vect{x}_2^*,\cdots,\vect{x}_m^*)$, the limit point of the sequence $\{x_{i,l}^n\}_{n=1}^{\infty}$, that satisfies them.
The Lagrangian for problem  \textbf{Max1} is
\begin{eqnarray}
\mathbb{L}(\vect{x},\vect{\lambda})=\sum_{i=1}^mU_i(\vect{x}_i)-\sum_{l \in \mathcal{L}}\lambda_l\left[\sum_{i: l\in \mathcal{L}_i}x_{i,l}-c_l\right]
\end{eqnarray}

and the KKT conditions are:
\begin{eqnarray}
\label{50}
\frac{\partial \mathbb{L}(\vect{x},\vect{\lambda})}{\partial x_{i,l}}|_{(\vect{x}^*,\vect{\lambda}^*)}=\frac{\partial U_i(\vect{x}^*)}{\partial x_{i,l}}-\lambda_{l}^*&=&0 \quad \quad \forall i, \ l\\
\label{51}
\lambda_{l}^*\left[\sum_{i: l\in \mathcal{L}_i}x_{i,l}^*-c_l\right]&=&0 \quad \quad \forall l
\end{eqnarray} 

Now, if we set $\lambda_l^*=w_l^*$ (notice that, due to Corollary \ref{cor1}, $w_l^*=w_{i,l}^*=w_{-i,l}^*$ for every $i,  l $), then, due to \eqref{38}, we have
\begin{eqnarray}
&&\frac{\partial \Upsilon_i(\vect{p})}{\partial p_{i,l}}=0 \Rightarrow\nonumber \\
&& w_{-i,l}^*\left(\frac{\sum_{j \in \mathcal{A}_l, j \neq i}x_{j,l}^*-c_l}{\gamma}\right)=w_{l}^*\left(\frac{\sum_{j \in \mathcal{A}_l, j \neq i}x_{j,l}^*-c_l}{\gamma}\right)=0, \quad \forall  l\in \mathcal{L},
\end{eqnarray}
which coincides with \eqref{51}. Furthermore, employing Corollary \ref{cor1} along with Lemma \ref{lemma5} imply that 	
\begin{eqnarray}
\frac{\partial\left\{U_i(\vect{x}_i)-\sum_{l\in \mathcal{L}_i}t_{i,l}\right\}}{\partial x_{i,l}}|_{x_{i,l}=x_{i,l}^*,\vect{w}_i=\vect{w}_l^*}=\frac{\partial U_i(\vect{x}_i)}{\partial x_{i,l}}|_{x_{i,l}=x_{i,l}^*}-w_{l}^*=0 \quad \forall \ i, l.
\end{eqnarray}
which coincides with \eqref{50}.
Therefore, the KKT conditions are satisfied and the proof is complete.	

\end{proof}

\end{document}


\section{A theorem}

\stmt{thrm}{sample}{Socrates is mortal.}

\refstmt{sample} can either be proven using data (the fact that
Socrates is dead), or by the proof which is provided in the appendix.

\section{Appendix}
In this appendix, the reader will find proofs of theorems not given in the text.

\stmtproof{sample}{Socrates is a man. All men are mortal.}

\rptstmtwithproof{sample}

\comment{Notice that you can put the \stmtproof{sample}{...} anywhere
you want, including right after your statement, just before using
\rptstmtwithproof, or a separate file (then use \input{proofs.tex}). That
way, neither your finished paper nor your source will be cluttered with
proofs.}